\theoremstyle{plain}
\def\NAT@def@citea{\def\@citea{\NAT@separator}}% Suppress spaces between citations using natbib.sty
\theoremstyle{plain}% Theorem-like structures provided by amsthm.sty
\newtheorem{theorem}{Theorem}[section]
\newtheorem{lemma}[theorem]{Lemma}
\newtheorem{corollary}[theorem]{Corollary}
\theoremstyle{definition}
\newtheorem{definition}[theorem]{Definition}
\theoremstyle{remark}
\newtheorem{remark}{Remark}
\def\dis{\displaystyle}
\def\ds{\displaystyle}
\DeclareMathOperator{\N}{\mathbb{N}}
\newcommand{\car}[1]{\raise1pt\hbox{$\chi$}_{#1}}
\pgfplotsset{width=7cm,compat=newest} 
\begin{document}

%\articletype{ARTICLE TEMPLATE}% Specify the article type or omit as appropriate

\title{Anisotropic elliptic equations involving unbounded coefficients and singular nonlinearities}
\author{
\name{Fessel Achhoud\textsuperscript{a}\thanks{Fessel Achhoud Email: fessel.achhoud@studenti.unime.it} and Hichem. Khelifi\textsuperscript{b} \thanks{Hichem Khelifi Email: khelifi.hichemedp@gmail.com}}
\affil{\textsuperscript{a} Dipartimento di Matematica e Informatica, University of Catania, Viale A. Doria, 6, 95125 Catania, Italy \\
\textsuperscript{b} Department of Mathematics, University of Algiers 1, Benyoucef Benkhedda, 2 Rue Didouche Mourad, Algiers, Algeria.}
}
\maketitle
\begin{abstract}
In this paper, we study the existence and regularity of solutions for a class of nonlinear singular elliptic equations involving unbounded coefficients and a singular right-hand side. Specifically, we are interested  to problem whose simplest model  is
%\begin{equation*}
%\left\{\begin{array}{ll} \displaystyle -\sum_{j\in\mathcal{E}}\partial_{j}\left([b(x)+u^{q}]\vert \partial_{j} u \vert^{p_{j}-2} \partial_{j} u\right)= \frac{f}{u ^{\gamma}} & \hbox{in}\;\;\mathcal{D}, \\
%u=0 & \hbox{on}\;\; \partial\mathcal{D},
% \end{array}
% \right.
%\end{equation*}
\begin{equation*}
-\sum_{j=1}^N\partial_{j}\left([1+u^{q}]\vert \partial_{j} u \vert^{p_{j}-2} \partial_{j} u\right)= \frac{f}{u^{\gamma}}\text{ in $\mathcal{D},$}\quad  u>0  \text{ in $\mathcal{D},$} \quad u=0  \hbox{ on}\;\; \partial\mathcal{D},
\end{equation*}
where $\mathcal{D}$ is a bounded open subset of $\mathbb{R}^{N}$ with $N>2$, $ \gamma\geq0$, $q >0 $, $p_{j}>2$ for all $j=1,...,N$ and the source term $f$ belongs to $L^1(\mathcal{D})$, with $f \geq 0$ and $f \not\equiv 0$.
\end{abstract}

\begin{keywords}
Anisotropic elliptic equations, Unbounded coefficients, Existence and regularity, Singular term,  $L^{1}$ data.
\end{keywords}

\section{Introduction}
%\subsection{Introduction}
 In the present work we investigate  the boundary value problem, given by
\begin{equation}
 \label{fhpr}
\left\{\begin{array}{ll} \displaystyle -\sum_{j\in\mathcal{E}}\partial_{j}\left([b(x)+ u^{q}]\vert \partial_{j} u \vert^{p_{j}-2} \partial_{j} u\right)= \frac{f}{u^{\gamma}} & \hbox{in}\;\;\mathcal{D}, \\
u>0 &\hbox{in}\;\;\mathcal{D},\\
u=0 & \hbox{on}\;\; \partial\mathcal{D},
 \end{array}
 \right.
\end{equation}
where $\mathcal{D}$ is a bounded open subset of $\mathbb{R}^{N}$ ($N\geq 3$), $\mathcal{E}=\{ j \in \mathbb{N} : 1 \leq j \leq N \}$, $q,\gamma>0$, and $p_{j}$ satisfies 
\begin{align}
&\label{(01.2)}
 p_{N}\geq p_{N-1}\geq \ldots \geq p_{2}\geq p_{1}\geq 2\quad \mbox{and}\quad N>\overline{p}\geq 2,
\end{align}
where $\overline{p}$ is the harmonic mean of $p_i$, defined as 
$$\overline{p}=\left(\frac{1}{N}\sum_{j\in\mathcal{E}}^{N}\frac{1}{p_{j}}\right)^{-1}.$$
\par $a: \mathcal{D} \rightarrow \mathbb{R}$ is a measurable function such that
\begin{equation}
\label{(1.2)}
\beta \geq b(x) \geq \alpha\;\; \mbox{a.e. in}\; \mathcal{D}, 
\end{equation}
with $\alpha, \beta>0$, and
\begin{equation}
\label{(1.3)}
f\in L^{1}(\mathcal{D}),  \quad \mbox{with } \quad f\geq 0, \quad \mbox{and} \quad f \not\equiv0. 
\end{equation}
There is by now a large number of papers and an increasing interest about anisotropic problems. With no hope of being complete, let us mention some pioneering works on anisotropic Sobolev spaces 
\cite{SNIM, SM, JR, JR1} and some more recent existence and regularity results for  anisotropic boundary value problems \cite{FR, KM, ZKM}.
%The term "anisotropy" is employed across various scientific disciplines, and its meaning may vary depending on the context, particularly in relation to equations. 
Interest in anisotropic problems has significantly increased in recent years due to their wide range of applications in mathematical modelling of natural phenomena, particularly in biology and fluid mechanics. For instance, anisotropy plays a crucial role in the mathematical modelling of fluid dynamics in anisotropic media, where the conductivities of the material differ in different directions (see, e.g., \cite{SJS}). Additionally, anisotropic models are used in biology to describe the spread of epidemic diseases in heterogeneous environments (see, e.g., \cite{MK}). Numerous results have been achieved in the study of anisotropic problems across various fields. For further details, the reader is referred to \cite{AIAA,AGF,SM,BS}, as well as the references cited therein.

From the mathematical point of view, the anisotropic quasilinear elliptic equations of type \eqref{fhpr} naturally arise within variational frameworks, for instance when considering integral functionals like

\[
\mathcal{J}(v):=\sum_{j\in\mathcal{E}}\frac{1}{p_j}\int_{\mathcal{D}}[b(x)+|v|^{q}]|\partial_{j}v|^{p_j}dx -\int_{\mathcal{D}}F(x,v)\,dx,
\]

where \(F(x,v)\) suitably captures singular nonlinearities. Problems of this kind, whether or not directly linked to a variational formulation, have been extensively studied in recent literature under various hypotheses on data and coefficients (see, for example, \cite{ALF,L}). 

We have to mention that the investigation of an anisotropic elliptic problem involving a singular nonlinearity was initiated in \cite{LM}; specifically, the authors focus on proving the existence and regularity of solutions  to the following boundary value problem
\begin{equation}\label{miripr}
-\sum_{j=1}^N\partial_{j}\left(\vert \partial_{j} u \vert^{p_{j}-2} \partial_{j} u\right)= \frac{f}{u^{\gamma}}\text{ in $\mathcal{D},$}\quad  u>0  \text{ in $\mathcal{D},$} \quad u=0  \hbox{ on}\;\; \partial\mathcal{D},
\end{equation}
%with boundary conditions \( u = 0 \) on \( \partial \Omega \) and \( u \geq 0 \) in \( \Omega \), where \( \Omega \) is a bounded domain and \( f(x) \) is a non-negative function.

The singular nonlinearity arises from the term \( u^{-\gamma} \), where the exponent \( \gamma \) can vary. The paper investigates several cases of \( \gamma \), including \( \gamma = 1 \), \( \gamma < 1 \), and \( \gamma > 1 \).
 They showed that:

%\begin{itemize}
%\item If \( \gamma < 1 \), the solution belongs to the Sobolev space \( W_0^{1, (p_i)}(\mathcal{D}) \) when \( m > \frac{N}{p} \), and to a more refined Sobolev space \( W_0^{1, (\eta_i)}(\mathcal{D}) \) when \( m \leq \frac{N}{p} \).
%
%\item In the case \( \gamma = 1 \), the solution resides in \( W_0^{1, (p_i)}(\mathcal{D}) \cap L^\infty(\mathcal{D}) \).
%
%\item For \( \gamma > 1 \), the solution improves its regularity by including an additional term, belonging to both \( W_0^{1, (p_i)}(\mathcal{D}) \) and \( L^{r(\gamma)}(\mathcal{D}) \), where \( r(\gamma) = \frac{N(\gamma - 1 + \overline{p})}{N - \overline{p}} \).
%\end{itemize}

\begin{itemize}
\item[$(\textbf{A}_{\gamma})$] If $\gamma < 1$,  then $u\in W_{0}^{1, (s_{j})}(\mathcal{D})$ with $s_j<\ds\frac{N[\bar{p}-1+\gamma]}{[N-1+\gamma]\overline{p}}p_{j},\quad \forall\; j\in\mathcal{E}$.

\item[$(\textbf{B}_{\gamma})$]  If $\gamma > 1$, then $u\in W_{0}^{1, (p_{j})}(\mathcal{D})\cap L^{r(\gamma)}(\mathcal{D})$, where
$ r(\gamma)=\ds\frac{N(\gamma-1+\overline{p})}{N-\overline{p}}.$

\item[$(\textbf{C}_{\gamma})$] If $\gamma = 1$, then $u\in W_{0}^{1, (p_{j})}(\mathcal{D})$.
\end{itemize}

These results provide a foundation for understanding the regularity of solutions to anisotropic elliptic problems, particularly when singular non-linearities are present. In this paper, we extend and improve upon these findings by
%%Our goal is the study of existence and regularity of  solutions to 
%%	\eqref{problemone}; by a finite energy solution we mean a function lying in the natural space in which such problems are naturally built-in in case of smooth nonlinear terms and data, i.e. $u \in \textit{W}^{1,p}_{0}(\Omega)$ if $p>1$ and 
%%	$u \in \textit{BV}(\Omega)$ if $p=1$. 
%	
%	
%	Our goal is to
deeply explore the interplay between an additional term of the form
$$\displaystyle -\sum_{j\in\mathcal{E}}\partial_{j}\left(u^{q}\vert \partial_{j} u \vert^{p_{j}-2} \partial_{j} u\right)$$	
 and the singular nonlinearity $\dis u^{-\gamma}$ in presence of data with really poor summability, namely $f\in L^1(\mathcal{D})$. In particular we deal with the regularizing effect,  in terms of Sobolev regularity,  provided by the lower order terms to the solutions of problems as  \eqref{miripr}. 
	
	These kinds of regularizing effects given by the gradient terms  with natural growth in isotropic elliptic problems  are nowadays quite classical see for instance \cite{FR,ALF,L}.

%When $p_i\equiv p,$ it is well-known that for elliptic boundary value problems of the general type 
%\begin{equation}\label{pbintro}
%\begin{cases}
%\dis-div\left(\left[b(x)+\vert u\vert^q\right]\vert \nabla u\vert^{p-2}\nabla u\right)=\frac{f}{\vert u\vert^\gamma}  &\text{in}\;\;\mathcal{D},\\
%u=0 &\text{on}\;\;\partial\mathcal{D},
%\end{cases}
%\end{equation}
%that presents a singularity  when the solutions $u$ reaches the value zero. Here     $\Omega\subset R^N$ is  an open bounded set ($N\geq 2$), $f$ is a nonnegative function, and $\gamma>0$.   
%In last decades these problems have gained growing interest both for purely theoretical tricky issues and for his intimate connections with applications. 
%
%
%\medskip
%Physical motivations for studying problems like \eqref{pbintro} arise in various fields. For instance, in the study of thermo-conductivity, where ${u^{\gamma}}$ represents the resistivity of the material, in signal transmissions,  in  boundary layer models,  and in the theory of non-Newtonian pseudoplastic fluids (see Section \ref{sec2} below). 
%
%\medskip
%From a purely theoretical standpoint, after the pioneering existence and uniqueness results presented in \cite{fulks}, a systematic treatment of problems like \eqref{pbintro} began with \cite{stuart, crt}.
%%\subsection{Main results}

Namely, by \textit{distributional solution} for problem \eqref{fhpr}  a function $u \in W_0^{1,1}(\mathcal{D})$ that satisfies
\begin{align}
\label{M1}
\left(b(x)+ u^{q}\right)\vert  \partial_{j}u\vert^{p_{j}-1}\in L^{1}(\mathcal{D}),\quad \forall\; j\in\mathcal{E},
\end{align} 
and there exists $\tilde{C} > 0$ such that
\begin{align}
\label{M2}
 u\geq \tilde{C}\;\; \mbox{in}\; \mathcal{D},
\end{align}
Moreover, for all test functions $\varphi \in C_c^1(\mathcal{D})$, the weak formulation holds
\begin{align}
\label{M3}
\sum_{j\in\mathcal{E}}\int_{\mathcal{D}} [b(x)+u^{q}]\vert \partial_{j} u \vert^{p_{j}-2} \partial_{j} u \partial_{j}\varphi \;dx =\int_{\mathcal{D}}\frac{f}{u^{\gamma}}\varphi \;dx.
\end{align}

%\end{definition}
%%%%%%%%%%%%%%%%%%%%%%%%%%%%%%%%%%%%%%%%%%%%%%%%%%%%%
 Our main result is the following.
\begin{theorem} 
\label{theo1}
Let $f \in L^{1}(\mathcal{D})$ be nonnegative, and assume conditions \eqref{(01.2)}–\eqref{(1.2)} hold. Then, problem \eqref{fhpr} admits a nonnegative  weak solution $u$, with the following regularity
\begin{itemize}
\item[$(\textbf{A}_{\gamma,q})$] If $0 < \gamma < 1$:
\begin{itemize}
\item[(i)] When $q > 1 - \gamma$, the solution belongs to $W_{0}^{1, (p_{j})}(\mathcal{D})$.
\item[(ii)] When $q \leq 1 - \gamma$, the solution belongs to $W_{0}^{1, (\eta_{j})}(\mathcal{D})$, where
$$\eta_{j}=\frac{N(q+\gamma+\bar{p}-1)}{[N-(1-q-\gamma)]\overline{p}}p_{j},\quad \forall\; j\in\mathcal{E}.$$
\end{itemize}

\item[$(\textbf{B}_{\gamma,q})$]  If $\gamma > 1$ and $q \geq 0$, then $u\in W_{0}^{1, (p_{j})}(\mathcal{D})\cap L^{r(\gamma,q)}(\mathcal{D})$, where
$$ r(\gamma,q)=\frac{N(q+\gamma-1+\overline{p})}{N-\overline{p}}.$$
\item[$(\textbf{C}_{\gamma,q})$] If $\gamma = 1$ and $q \geq 0$, then $u\in W_{0}^{1, (p_{j})}(\mathcal{D})\cap L^{r(1,q)}(\mathcal{D})$.
\end{itemize}
\end{theorem}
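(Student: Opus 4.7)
The plan is to use the standard approximation-and-limit scheme for singular elliptic problems, adapted to the anisotropic setting with an unbounded coefficient. For each $n\in\mathbb{N}$ I would consider the regularized problem obtained by replacing $f$ by $f_n:=\min\{f,n\}$ and $u^{-\gamma}$ by $(u_n+\tfrac{1}{n})^{-\gamma}$. Existence of a nonnegative $u_n\in W_0^{1,(p_j)}(\mathcal{D})\cap L^\infty(\mathcal{D})$ follows from Schauder's fixed point theorem, freezing both $u_n^q$ in the principal part and $(u_n+\tfrac{1}{n})^{-\gamma}$ on the right-hand side and then applying standard pseudo-monotone operator theory for the resulting strictly monotone anisotropic operator with a bounded datum. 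Testing the approximate equation for $u_n$ and $u_m$ (with $n\geq m$) by $(u_m-u_n)^+$ and using monotonicity of the truncated datum gives $u_n\geq u_m$, so the sequence is pointwise monotone; moreover, since $f\not\equiv 0$, a local comparison with the solution of an equation having a strictly positive datum provides a constant $c_\omega>0$, independent of $n$, with $u_n\geq c_\omega$ on every $\omega\Subset\mathcal{D}$, which will ultimately yield the pointwise bound \eqref{M2}.

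The uniform a priori estimates are obtained by testing the approximate equation with case-dependent powers of $u_n$. For $(\textbf{C}_{\gamma,q})$ the test function $u_n$ gives directly $\sum_{j\in\mathcal{E}}\int_{\mathcal{D}}(b(x)+u_n^q)|\partial_j u_n|^{p_j}\,dx \leq \|f\|_{L^1}$; the lower bound $b\geq\alpha$ provides the $W_0^{1,(p_j)}$ estimate, while rewriting $u_n^q|\partial_j u_n|^{p_j}=C_j|\partial_j(u_n^{1+q/p_j})|^{p_j}$ and invoking the anisotropic Sobolev embedding applied to $u_n^{1+q/\bar{p}}$ yields the claimed $L^{r(1,q)}$ control. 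For $(\textbf{B}_{\gamma,q})$ the test function $u_n^\gamma$ (properly truncated) absorbs the singularity since $u_n^\gamma/(u_n+\tfrac{1}{n})^\gamma\leq 1$, producing $\int_{\mathcal{D}}u_n^{\gamma+q-1}|\partial_j u_n|^{p_j}\,dx\leq C$; the anisotropic Sobolev inequality applied to $u_n^{(\gamma+q-1+\bar{p})/\bar{p}}$ gives $L^{r(\gamma,q)}$, and the $W_0^{1,(p_j)}$ bound is recovered by splitting $\{u_n\geq 1\}$ (where $u_n^{\gamma-1}\geq 1$) from $\{u_n<1\}$ (treated via H\"older against the $L^{r(\gamma,q)}$ bound together with the $u_n^q$ coercivity). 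In case $(\textbf{A}_{\gamma,q})(\mathrm{i})$ with $q>1-\gamma$, testing by $u_n$ yields $\int f_n u_n^{1-\gamma}$ on the right; since $1-\gamma<q$, Young's inequality transfers a small power of $u_n$ onto the coercivity term, leaving $\|f\|_{L^1}$ and giving the $W_0^{1,(p_j)}$ estimate. For $(\mathrm{ii})$, a test function of the form $u_n^{\sigma}$ with $\sigma$ tuned so that the exponent produced after the anisotropic Sobolev embedding applied to $u_n^{(\sigma+\eta_j-1)/\eta_j}$ matches the power $1-\gamma$ on the right yields exactly $W_0^{1,(\eta_j)}$ after a final H\"older interpolation.

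Passage to the limit then proceeds along standard lines. The monotone sequence $(u_n)$ converges a.e.\ and, by the estimates above, weakly in the appropriate anisotropic Sobolev space; $u\geq c_\omega$ on every compact subset, hence \eqref{M2} holds, and by dominated convergence $f_n/(u_n+\tfrac{1}{n})^\gamma\to f/u^\gamma$ strongly in $L^1_{\mathrm{loc}}(\mathcal{D})$. The almost-everywhere convergence of the partial derivatives $\partial_j u_n\to\partial_j u$ will be obtained by an anisotropic Boccardo--Murat argument: test the difference of the approximate equations with $T_k(u_n-u)$, use the componentwise strict monotonicity of $\xi\mapsto|\xi|^{p_j-2}\xi$ together with the uniform coercivity $b(x)+u_n^q\geq\alpha$, and conclude via Vitali's theorem. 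The main difficulty I anticipate is precisely this last step, because the coefficient $b(x)+u_n^q$ is unbounded as soon as $u_n$ is not (which may occur for small $\gamma+q$ and merely $L^1$ datum); controlling the contribution of the set $\{u_n\geq k\}$ uniformly in $n$ requires a careful truncation of the test function and an interplay with the $L^{r(\gamma,q)}$ bound obtained above. A secondary but nontrivial point is pinning down the precise exponent $\eta_j$ in $(\textbf{A}_{\gamma,q})(\mathrm{ii})$, which depends on a delicate balance between the anisotropic Sobolev exponent and the power absorbed by H\"older's inequality.
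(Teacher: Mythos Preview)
Your overall scheme (regularize, power-type test functions for the a priori estimates, pass to the limit) matches the paper's, but two steps do not close as written. The monotonicity claim $u_n\geq u_m$ via the test function $(u_m-u_n)^+$ is obstructed by the coefficient $u_n^q$ in the principal part: the map $s\mapsto (b(x)+s^q)|\xi_j|^{p_j-2}\xi_j$ is \emph{increasing} in $s$, so the usual comparison hypothesis fails, and after testing you are left with a cross term $\int(u_n^q-u_m^q)|\partial_j u_m|^{p_j-2}\partial_j u_m\,\partial_j(u_m-u_n)^+$ of indefinite sign. The paper never asserts monotonicity; it obtains the uniform lower bound by showing that $T_1(u_n)$ is a weak supersolution of the bare anisotropic $p_j$-Laplacian (simply dropping the $u_n^q$ contribution and the nonnegative right-hand side) and then invoking a weak Harnack inequality. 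Your alternative of comparing with a fixed subsolution having strictly positive datum runs into the same obstruction.

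Second, testing with $u_n$ in case $(\textbf{A}_{\gamma,q})(\mathrm{i})$ produces $\int f\,u_n^{1-\gamma}$ on the right, and with $f$ merely in $L^1$ there is no Young or H\"older step that absorbs this into the left-hand side: the condition $1-\gamma<q$ does not help, because the coercive term carries gradients, not a pure power of $u_n$ against which one could trade. The paper tests instead with $u_n^\gamma\bigl(1-(1+u_n)^{1-(q+\gamma)}\bigr)$, which keeps the right-hand side below $\|f\|_{L^1}$ automatically, and separately with $T_k(u_n)^\gamma$ for the region near zero. (In fact $u_n^\gamma$ alone already works: the $b(x)u_n^{\gamma-1}$ piece, with $\gamma-1<0$, controls $\{u_n\leq1\}$, while the $u_n^{q+\gamma-1}$ piece, with $q+\gamma-1>0$, controls $\{u_n\geq1\}$.) Finally, passing to the limit in the term $u_n^q|\partial_j u_n|^{p_j-2}\partial_j u_n$ requires a uniform $L^{\rho_j}$ bound with $\rho_j>1$ on $u_n^q|\partial_j u_n|^{p_j-1}$; the paper devotes a separate lemma to this, using yet another tailored test function $(T_1(u_n))^\gamma\bigl(1-(1+u_n)^{1-\lambda}\bigr)$. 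This is exactly the ingredient that resolves the unbounded-coefficient difficulty you correctly flag, and it should be made explicit in your outline.
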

%\begin{center}
%	\begin{table}[h]
%	\centering
%		\setlength{\tabcolsep}{8pt}
%		\renewcommand{\arraystretch}{2.4} 
%		\begin{tabular}{ | c | c | c | }  
%			\cline{1-3}
%			 & Problem \ref{fhpr} &  Problem \ref{miripr}
%			\\
%			\hline
%			$\gamma<1$
%			&$(\textbf{A}_{\gamma,q})$
%			& $(\textbf{A}_{\gamma})$
%			\\\cline{1-3}
%			$\gamma>1$
%			&$(\textbf{B}_{\gamma,q})$
% 			& $(\textbf{B}_{\gamma})$
%			\\\cline{1-3}
%			$\gamma=1$
%			&$(\textbf{C}_{\gamma,q})$
%			& $(\textbf{C}_{\gamma})$
%			\\\cline{1-3}			
%		\end{tabular}
%		\vspace*{3mm}
%		\caption{Regularity of solutions}
%	\end{table}
%\end{center}
\begin{remark}
We observe that $$q>0\Leftrightarrow r(\gamma,q)>r(\gamma),$$
and  $$\bar{p}<N \Leftrightarrow W^{1,\eta_j}_0(\mathcal{D})\subseteq W^{1,s_j}_0(\mathcal{D}).$$
%$$s_j\in \left]1,
\end{remark}
\begin{remark}
Note that
\begin{equation}
\gamma=0 \Rightarrow (\textbf{A}_{\gamma,q})\equiv \text{ Theorem 3.7 in \cite{LR} (with $\lambda=p$).}
\end{equation}
So, our result  links up continuously with the result in the isotropic case without the singular term in the right hand side.
\end{remark}
\begin{remark}
%We have limited ourselves in studying the problem \ref{fhpr} in the case where \( f \in L^m \). However, inspired by the work in the isotropic framework \cite{FR}, we can extend the regularization effect obtained in our paper to the more general case of purely summable data, i.e., \( f \in L^m(\mathcal{D}) \) with \( 1 < m < (p^*)' \)."
%You can phrase your remark clearly and concisely as follows:

In this paper, we restrict our investigation of the problem \eqref{fhpr} to the case \( f \in L^1(\mathcal{D}) \). However, inspired by the results in the isotropic framework presented in \cite{FR}, it is possible to extend the regularization effect established here to the case with purely summable data, namely to functions \( f \in L^m(\mathcal{D}) \) with \(1 < m < (\bar{p}^*)' \).
\end{remark}

\textbf{Notation.} Hereafter we use the following standard notations: for $k > 0$ and all $s \in \mathbb{R}$, we define the truncation functions $T_k(s)$ and $G_k(s)$ as follows
\begin{equation*}
T_{k}(s)=\min\{k,\max\{-k,s\}\},\quad \mbox{and}\quad G_{k}(s)=s-T_{k}(s).
\end{equation*}

The paper is organized as follows. In Section 2  we provide some fundamental information for the theory of anisotropic Sobolev spaces since it is our work space. Section 3 is devoted to prove our main Theorem.
%%%%%%%%%%%%%%%%%%%%%%%%%%%%%%%%%%%%%%%%%%%%%%%%%%%%
\section{Anisotropic Sobolev spaces}
The anisotropic Sobolev spaces $W^{1,(p_{j})}(\mathcal{D})$ and $W_{0}^{1,(p_{j})}(\mathcal{D})$ provide the functional framework for Problem \eqref{fhpr}; see \cite{R10, JR, JR1} for details. Let $\mathcal{D}$ be an open bounded subset of $\mathbb{R}^{N}$, where $N\geq 2$, and let $p_{j}$ satisfy the conditions \eqref{(01.2)}. The anisotropic Sobolev spaces $W^{1,(p_{j})}(\mathcal{D})$ and $W_{0}^{1,(p_{j})}(\mathcal{D})$ are defined as follows
$$\begin{array}{ll} \displaystyle W^{1,(p_{j})}(\mathcal{D})=\left\{z\in W^{1,1}(\mathcal{D}) :\; \int_{\mathcal{D}}\vert\partial_{j}z\vert^{p_{j}}\;dx<\infty,\;\;  j\in\mathcal{E}\right\},
\\
\displaystyle W_{0}^{1,(p_{j})}(\mathcal{D})=\left\{z\in W_{0}^{1,1}(\mathcal{D}) :\; \int_{\mathcal{D}}\vert\partial_{j}z\vert^{p_{j}}\;dx<\infty,\;\;  j\in\mathcal{E}\right\}.
\end{array}
$$
Alternatively, $W_{0}^{1,(p_{j})}(\mathcal{D})$ is the closure of $C_{0}^{\infty}(\mathcal{D})$ with respect to the norm
$$\Vert z\Vert_{W_{0}^{1,(p_{j})}(\mathcal{D})}=\sum_{j\in\mathcal{E}}
\left(\int_{\mathcal{D}}\vert\partial_{j}z\vert^{p_{j}}\;dx\right)^{\frac{1}{p_{j}}},$$
with this norm, $W_{0}^{1,(p_{j})}(\mathcal{D})$ is a separable and reflexive Banach space, and its dual is $\left( W_{0}^{1,(p_{j})}(\mathcal{D}) \right)^{*}$, where $ p'_{j}$ is the conjugate of $p_{j}$, i.e., $\frac{1}{p_{j}} + \frac{1}{p'_{j}} = 1$ for all $j\in\mathcal{E}$. 
%%%%%%%%%%%%%%%%%%%%%%%%%%%%%%%%%%%%%%%%%%%%%%%%%%%%
\begin{lemma}\cite{R10}
\label{L1}
For any $z \in W_{0}^{1,(p_{j})}(\mathcal{D})$ with $\overline{p} < N$, there exists a constant $C > 0$ depending only on $\mathcal{D}$ such that
\begin{align}
\label{(4)}
&\left(\int_{\mathcal{D}}\vert z\vert^{\delta}\;dx\right)^{\frac{1}{\delta}}\leq C\prod_{j\in\mathcal{E}}\left(\int_{\mathcal{D}}\vert \partial_{j}z\vert^{p_{j}}\;dx\right)^{\frac{1}{Np_{j}}}, \quad \forall \delta\in[1,\overline{p}^{*}],\quad \overline{p}^{*}=\frac{N\overline{p}}{N-\overline{p}}, \\
\label{(5)}
&\left(\int_{\mathcal{D}}\vert z\vert^{\overline{p}^{*}}\;dx\right)^{\frac{p_{N}}{\overline{p}^{*}}}\leq C \sum_{j\in\mathcal{E}}\int_{\mathcal{D}}\vert \partial_{j}z\vert^{p_{j}}\;dx.
\end{align}
\end{lemma}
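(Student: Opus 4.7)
The plan is to establish both \eqref{(4)} and \eqref{(5)} via the classical Troisi-type strategy: first prove \eqref{(4)} at the critical exponent $\overline{p}^{*}$, then deduce the subcritical range of \eqref{(4)} by H\"older on the bounded domain, and finally convert to the sum form \eqref{(5)} by AM--GM. By definition of $W_{0}^{1,(p_{j})}(\mathcal{D})$ as the closure of $C_{c}^{\infty}(\mathcal{D})$ in the $W_{0}^{1,(p_{j})}$-norm, density together with Fatou's lemma on the left-hand side and continuity of the product in the $L^{p_{j}}$-norms on the right-hand side reduce both proofs to $z \in C_{c}^{\infty}(\mathcal{D})$, which we extend by zero to $\mathbb{R}^{N}$.

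The first building block is the $L^{1}$-based Gagliardo--Nirenberg--Sobolev inequality
\[
\|z\|_{L^{N/(N-1)}(\mathbb{R}^{N})} \leq \prod_{j\in\mathcal{E}} \|\partial_{j} z\|_{L^{1}(\mathbb{R}^{N})}^{1/N},
\]
obtained by combining the pointwise bounds $|z(x)| \leq \int_{\mathbb{R}} |\partial_{j} z|(\ldots, t, \ldots)\,dt$ across each direction $j$, taking the geometric mean, and integrating successively over the $N$ coordinates with iterated generalized H\"older (with $N-1$ factors at each step). To pass to general $p_{j}$, apply this base inequality to $w := |z|^{\beta}\,\mathrm{sgn}(z)$ for a parameter $\beta > 1$ to be chosen, and bound $\|\partial_{j} w\|_{L^{1}} \leq \beta\,\|z\|_{L^{(\beta-1)p_{j}'}}^{\beta-1}\,\|\partial_{j} z\|_{L^{p_{j}}}$ by H\"older with conjugate pair $(p_{j}', p_{j})$; this yields
\[
\|z\|_{L^{\beta N/(N-1)}}^{\beta} \leq C\,\prod_{j\in\mathcal{E}} \|z\|_{L^{(\beta-1)p_{j}'}}^{(\beta-1)/N}\,\|\partial_{j} z\|_{L^{p_{j}}}^{1/N}.
\]
Choosing $\beta = \overline{p}^{*}(N-1)/N$ so that $\beta N/(N-1) = \overline{p}^{*}$, and then absorbing the right-hand $\|z\|_{L^{(\beta-1)p_{j}'}}$-factors back to the left via H\"older on the bounded domain $\mathcal{D}$ (together, where necessary, with interpolation among the differing $L^{(\beta-1)p_{j}'}(\mathcal{D})$-norms), produces \eqref{(4)} at $\delta = \overline{p}^{*}$. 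The harmonic-mean structure $\sum_{j} 1/p_{j}' = N - N/\overline{p}$ is precisely what makes the exponent counting close and fixes the critical exponent as $\overline{p}^{*}$.

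For $\delta \in [1, \overline{p}^{*})$, the elementary H\"older bound $\|z\|_{L^{\delta}(\mathcal{D})} \leq |\mathcal{D}|^{1/\delta - 1/\overline{p}^{*}}\,\|z\|_{L^{\overline{p}^{*}}(\mathcal{D})}$ combined with the critical case extends \eqref{(4)} to the full range of $\delta$, absorbing $|\mathcal{D}|$ into the constant $C = C(\mathcal{D})$. Finally, to derive \eqref{(5)}, I raise the critical case of \eqref{(4)} to the power $p_{N}$ and convert the resulting product $\prod_{j\in\mathcal{E}} \bigl(\int |\partial_{j} z|^{p_{j}}\bigr)^{p_{N}/(Np_{j})}$ into the sum $\sum_{j\in\mathcal{E}} \int |\partial_{j} z|^{p_{j}}$ via the weighted AM--GM inequality $\prod y_{j}^{\lambda_{j}} \leq \sum \lambda_{j} y_{j}$, with the weights $\lambda_{j}$ renormalized to sum to $1$ by means of the harmonic-mean identity $(1/N)\sum 1/p_{j} = 1/\overline{p}$. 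The main obstacle is the absorption step in the Troisi stage: the varying $p_{j}$'s prevent any uniform algebraic simplification of the differing $L^{(\beta-1)p_{j}'}$-norms, and overcoming it rests squarely on the harmonic-mean structure of $\overline{p}$, which is the conceptual heart of the embedding.
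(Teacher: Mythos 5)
The paper itself gives no argument for Lemma \ref{L1} (it is quoted from Troisi \cite{R10}), so your proposal must be judged against the standard proof, and there is a genuine gap exactly at the step you yourself call ``the main obstacle''. With a single power $\beta$ fixed by $\beta N/(N-1)=\overline{p}^{*}$, i.e. $\beta-1=N(\overline{p}-1)/(N-\overline{p})$, one checks that $(\beta-1)p_{j}'\leq\overline{p}^{*}$ if and only if $p_{j}\geq\overline{p}$. Since $\overline{p}$ is the harmonic mean, in any genuinely anisotropic case the small exponents (at least $j=1$) satisfy $p_{j}<\overline{p}$, so the corresponding factors $\Vert z\Vert_{L^{(\beta-1)p_{j}'}}$ sit \emph{above} the critical exponent $\overline{p}^{*}$; H\"older on the bounded domain only lowers Lebesgue exponents, and interpolation among the norms you control cannot reach above $\overline{p}^{*}$, so those factors cannot be absorbed into the left-hand side and the argument stalls precisely where the anisotropy matters. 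The classical repair, which is Troisi's actual argument, is to use direction-dependent powers: apply $|z(x)|^{\beta_{j}}\leq\beta_{j}\int_{\mathbb{R}}|z|^{\beta_{j}-1}|\partial_{j}z|\,dx_{j}$ with $\beta_{j}=1+\overline{p}^{*}/p_{j}'$, run the iterated H\"older over the coordinates, and then use H\"older with the pair $(p_{j}',p_{j})$ in each direction; every $z$-factor on the right then appears in the single norm $\Vert z\Vert_{L^{\overline{p}^{*}}}$ with total exponent $\sum_{j\in\mathcal{E}}(\beta_{j}-1)/(N-1)=\overline{p}^{*}N(\overline{p}-1)/\big(\overline{p}(N-1)\big)$, which is strictly less than $\overline{p}^{*}$ precisely because $\overline{p}<N$, and absorption gives $\Vert z\Vert_{L^{\overline{p}^{*}}}\leq C\prod_{j\in\mathcal{E}}\Vert\partial_{j}z\Vert_{L^{p_{j}}}^{1/N}$. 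Your reduction to $C_{c}^{\infty}(\mathcal{D})$ by density, the base $L^{1}$ Gagliardo inequality, and the passage to $\delta<\overline{p}^{*}$ by H\"older on the bounded domain are all fine.

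The deduction of \eqref{(5)} also does not close as written: your weights $\lambda_{j}=p_{N}/(Np_{j})$ sum to $p_{N}/\overline{p}\geq 1$, and after any honest renormalization AM--GM yields $\Vert z\Vert_{L^{\overline{p}^{*}}}^{p_{N}}\leq C\big(\sum_{j\in\mathcal{E}}\int_{\mathcal{D}}|\partial_{j}z|^{p_{j}}\,dx\big)^{p_{N}/\overline{p}}$, equivalently $\Vert z\Vert_{L^{\overline{p}^{*}}}^{\overline{p}}\leq C\sum_{j\in\mathcal{E}}\int_{\mathcal{D}}|\partial_{j}z|^{p_{j}}\,dx$ (bound each factor by the full sum $S$ and use $\prod_{j}S^{1/(Np_{j})}=S^{1/\overline{p}}$), not \eqref{(5)}. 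This is not something you can repair, because with the exponent $p_{N}$ the inequality fails in general: for $N=3$, $(p_{1},p_{2},p_{3})=(2,2,4)$ (so $\overline{p}=12/5$, $\overline{p}^{*}=12$), the functions $z=\lambda\varphi(x_{3})\psi(x_{1}/\varepsilon,x_{2}/\varepsilon)$ with $\varepsilon=\lambda^{-1}$ make the left side of \eqref{(5)} grow like $\lambda^{10/3}$ while the right side is only $O(\lambda^{2})$. The $p_{N}$-form does hold under the normalization $\sum_{j}\int_{\mathcal{D}}|\partial_{j}z|^{p_{j}}\,dx\leq 1$, since then $I_{j}\leq 1$ allows $I_{j}^{1/(Np_{j})}\leq I_{j}^{1/(Np_{N})}$; and the $\overline{p}$-version is all that the application around \eqref{(13)} actually requires, since any exponent larger than $1$ on the left suffices there. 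So: fix the absorption step with direction-dependent powers $\beta_{j}$, and prove the sum form with $\overline{p}$ (or state the $p_{N}$ form only under that smallness normalization).
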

%%%%%%%%%%%%%%%%%%%%%%%%%%%%%%%%%%%%%%%%%%%%%%%%%%%%
\begin{lemma}\cite{R10}
\label{L2}
For any $z\in W_{0}^{1,(\delta_{j})}(\mathcal{D})\cap L^{\infty}(\mathcal{D})$ with $\overline{\delta}<N$, there exists a constant $C>0$ depending only on $\mathcal{D}$ such that 
\begin{equation}
\label{(9)}
\left(\int_{\mathcal{D}}\vert z\vert^{r}\;dx\right)^{\frac{N}{\overline{\delta}}-1}\leq C\prod_{j\in\mathcal{E}}\left(\int_{\mathcal{D}}
\vert \partial_{j}z\vert^{\delta_{j}}\vert v\vert^{t_{j}\delta_{j}}\;dx\right)^{\frac{1}{\delta_{j}}},
\end{equation}
for any $r$ and $t_{j}\geq 0$ satisfying
$$\frac{1}{r}=\frac{b_{j}(N-1)-1+\frac{1}{\delta_{j}}}{t_{j}+1}\quad \mbox{and}\quad \sum_{j\in\mathcal{E}}b_{j}=1.$$
\end{lemma}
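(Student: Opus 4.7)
The proof follows the now-standard approximation-and-compactness scheme for singular anisotropic problems; the delicate part is the case-specific a priori estimates that exploit the regularizing lower-order term $u^q$. I would begin by truncating both the datum and the singularity: set $f_n = T_n(f) \in L^\infty(\mathcal{D})$ and, for each $n \in \mathbb{N}$, solve the regularized problem
\begin{equation*}
 -\sum_{j\in\mathcal{E}}\partial_{j}\bigl([b(x)+ u_n^{q}]\vert \partial_{j} u_n \vert^{p_{j}-2} \partial_{j} u_n\bigr)= \frac{f_n}{(u_n + 1/n)^{\gamma}}, \qquad u_n\in W_0^{1,(p_j)}(\mathcal{D}).
\end{equation*}
Since the right-hand side is bounded and the principal operator is coercive and pseudo-monotone on $W_0^{1,(p_j)}(\mathcal{D})$, a Leray-Lions/Schauder fixed-point argument produces nonnegative $u_n \in W_0^{1,(p_j)}(\mathcal{D}) \cap L^\infty(\mathcal{D})$. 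Comparison in $n$ yields $u_{n+1} \ge u_n$, while the strong maximum principle applied to $u_1$ gives $u_n \ge c_\omega > 0$ on every $\omega \Subset \mathcal{D}$, which provides the positivity \eqref{M2} in the limit and makes the singular right-hand side an $L^1_{\mathrm{loc}}$ object.

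The heart of the proof is the uniform (in $n$) a priori estimates, case by case. In $(\textbf{A}_{\gamma,q})(i)$ I would test with $u_n$ to obtain
$\alpha \sum_j \int |\partial_j u_n|^{p_j} + \sum_j \int u_n^q |\partial_j u_n|^{p_j} \le \int f_n u_n^{1-\gamma}$; on $\{u_n \le 1\}$ the right-hand side is dominated by $\|f\|_{L^1}$, and on $\{u_n > 1\}$ the hypothesis $q > 1-\gamma$ allows Young's inequality (with conjugate exponents $q/(1-\gamma)$ and $q/(q-1+\gamma)$) to convert $u_n^{1-\gamma}$ into a power absorbable via the anisotropic Sobolev embedding of Lemma \ref{L1}, producing a uniform $W_0^{1,(p_j)}$ bound. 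In $(\textbf{A}_{\gamma,q})(ii)$ the boost is too weak for direct absorption and only $W_0^{1,(\eta_j)}$ is expected; the exponent $\eta_j$ arises from Hölder with conjugate exponents $p_j/\eta_j$ and $p_j/(p_j-\eta_j)$ applied to
\[
\int |\partial_j u_n|^{\eta_j}\,dx = \int (b+u_n^q)^{\eta_j/p_j} |\partial_j u_n|^{\eta_j} (b+u_n^q)^{-\eta_j/p_j}\,dx,
\]
after which balancing the resulting Lebesgue exponents against the anisotropic Sobolev inequality \eqref{(4)} to close the loop pins down precisely $\eta_j = \frac{N(q+\gamma+\overline{p}-1)p_j}{[N-(1-q-\gamma)]\overline{p}}$. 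For $(\textbf{B}_{\gamma,q})$ and $(\textbf{C}_{\gamma,q})$ (with $\gamma \ge 1$) I would test with $u_n^\theta$ for a parameter $\theta \ge \gamma$ so the right-hand side becomes $\int f_n u_n^{\theta-\gamma}$; applying Lemma \ref{L2} with $\delta_j = p_j$ and $t_j\delta_j = q+\theta-1$, subject to $\sum b_j = 1$, forces precisely $r(\gamma,q) = \frac{N(q+\gamma-1+\overline{p})}{N-\overline{p}}$, yielding simultaneously the $L^{r(\gamma,q)}$ bound and the $W_0^{1,(p_j)}$ estimate.

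Finally, I would pass to the limit. The uniform estimates produce, up to a subsequence, $u_n \to u$ a.e.\ and weakly in the appropriate anisotropic space; the pointwise bound $u_n \ge c_\omega$ on compacts combined with $f_n \to f$ in $L^1$ provides $L^1_{\mathrm{loc}}$ domination of $f_n/(u_n+1/n)^\gamma \to f/u^\gamma$ for test functions supported in $\omega$. To pass to the limit in the nonlinear principal part I would use a Boccardo-Murat truncation argument together with the monotonicity of $\xi \mapsto [b+s^q]|\xi|^{p_j-2}\xi$ to obtain a.e.\ convergence of $\partial_j u_n$, concluding by Vitali's theorem. The main obstacle I anticipate is case $(\textbf{A}_{\gamma,q})(ii)$: verifying that precisely the stated $\eta_j$ is the correct balancing point between Hölder and the anisotropic Sobolev inequality (so that the closed-loop absorption works) demands careful bookkeeping, and because $\eta_j < p_j$ only the weaker compactness is available, which forces extra care in the gradient-convergence step.
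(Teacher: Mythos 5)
Your proposal does not address the statement it is supposed to prove. The statement is Lemma \ref{L2}, a weighted anisotropic Sobolev inequality of Troisi type: for $z\in W_{0}^{1,(\delta_{j})}(\mathcal{D})\cap L^{\infty}(\mathcal{D})$ with $\overline{\delta}<N$, the bound \eqref{(9)} holds for exponents $r$, $t_{j}\geq 0$, $b_{j}$ linked by $\frac{1}{r}=\frac{b_{j}(N-1)-1+\frac{1}{\delta_{j}}}{t_{j}+1}$ and $\sum_{j\in\mathcal{E}}b_{j}=1$ (note that the weight $\vert v\vert^{t_{j}\delta_{j}}$ in the display should read $\vert z\vert^{t_{j}\delta_{j}}$). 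This is a purely functional-analytic embedding result, which the paper does not reprove but imports from \cite{R10}. Nothing in your text engages with it: you never derive the inequality, never verify the compatibility relations among $r$, $t_{j}$, $b_{j}$, and in fact you invoke Lemma \ref{L2} as a tool inside your own argument, which is circular relative to the assigned task. What you wrote is instead a sketch of the proof of Theorem \ref{theo1} — regularization of \eqref{fhpr}, case-by-case a priori estimates, passage to the limit — i.e.\ a different statement altogether, so as a proof of Lemma \ref{L2} the proposal is entirely missing.

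For the record, a genuine proof runs along Troisi's classical lines. By density one may take $z$ smooth with compact support; the $L^{\infty}$ hypothesis makes all weighted integrals on the right of \eqref{(9)} finite. For each $j\in\mathcal{E}$ and a suitable exponent $\sigma_{j}$ one writes $\sup_{x_{j}}\vert z\vert^{\sigma_{j}}\leq \sigma_{j}\int\vert z\vert^{\sigma_{j}-1}\vert\partial_{j}z\vert\,dx_{j}$, splits $\vert z\vert^{\sigma_{j}-1}\vert\partial_{j}z\vert=\bigl(\vert z\vert^{t_{j}}\vert\partial_{j}z\vert\bigr)\vert z\vert^{\sigma_{j}-1-t_{j}}$, and applies H\"older's inequality in the $x_{j}$-variable with exponents $\delta_{j}$ and $\delta_{j}'$. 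Raising these one-directional bounds to the powers $b_{j}$, multiplying over $j$, and integrating successively in $x_{1},\dots,x_{N}$ via the generalized H\"older inequality for products of functions each independent of one variable produces an estimate of $\int_{\mathcal{D}}\vert z\vert^{r}\,dx$ by the product $\prod_{j\in\mathcal{E}}\bigl(\int_{\mathcal{D}}\vert\partial_{j}z\vert^{\delta_{j}}\vert z\vert^{t_{j}\delta_{j}}\,dx\bigr)^{1/\delta_{j}}$ raised to the appropriate power; the exponent bookkeeping is exactly what forces $\frac{1}{r}=\frac{b_{j}(N-1)-1+\frac{1}{\delta_{j}}}{t_{j}+1}$ for every $j$ together with $\sum_{j\in\mathcal{E}}b_{j}=1$, and gives the power $\frac{N}{\overline{\delta}}-1$ on the left. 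If you intend to submit a proof of this lemma, it is this argument (or a precise citation of it) that must be supplied, not the existence scheme for problem \eqref{fhpr}.
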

%%%%%%%%%%%%%%%%%%%%%%%%%%%%%%%%%%%%%%%%%%%%%%%%%%%%%
\par To prove that the solution $z$ in $\mathcal{D}$ is positive, we need the theorem, which we will discuss later. As a preliminary to this theorem, we take into account the following  problem
%%%%%%%%%%%%%%%%%%%%%%%%%%%%%%%%%%%%%%%%%%%%%%%%%%%%%
\begin{align}
\label{(10)}
\left\{\begin{array}{ll}-\displaystyle\sum_{j\in\mathcal{E}} \partial_{j}\left[\vert \partial_{j}z\vert^{p_{j}-2}\partial_{j}z\right]=\lambda \vert z\vert^{q-2}z& \hbox{in}\;\;\mathcal{D}, \\
 z  =0 & \hbox{on}\;\; \partial\mathcal{D}, \end{array}
 \right.
\end{align}
here $\lambda>0$ and $p_{1}<q<p_{N}$.
%%%%%%%%%%%%%%%%%%%%%%%%%%%%%%%%%%%%%%%%%%%%%%%%%%%%%
\par We define weak supersolutions for the given problem \eqref{(10)}; for further details, see \cite{R3}.
%%%%%%%%%%%%%%%%%%%%%%%%%%%%%%%%%%%%%%%%%%%%%%%%%%%%
\begin{definition}  
\label{D1}  
A function $z \in W_{0}^{1,(p_{j})}(\mathcal{D})$ that is nonnegative a.e. in $\mathcal{D}$ is a positive weak supersolution of \eqref{(10)} if  
\begin{equation}  
\label{(11)}  
\sum_{j\in\mathcal{E}} \int_{\mathcal{D}} \vert \partial_{j} z \vert^{p_{j}-2} \partial_{j} z \, \partial_{j} \phi \;dx\geq 0, \quad \forall \phi \in C_{0}^{\infty}(\mathcal{D}), \; \phi \geq 0.  
\end{equation}  
\end{definition}  
%%%%%%%%%%%%%%%%%%%%%%%%%%%%%%%%%%%%%%%%%%%%%%%%%%%%%
\par The following weak Harnack inequality for weak super-solutions is the key result.
%%%%%%%%%%%%%%%%%%%%%%%%%%%%%%%%%%%%%%%%%%%%%%%%%%%%%
\begin{theorem}{\cite{R3,R4}}
\label{T1}
Suppose that $z$ is a weak non-negative supersolution of \eqref{(10)} with $z < M$ in $\mathcal{D}$, and that $p_1 \geq 2$. Then
\begin{equation}
\label{(12)}
\rho^{-\frac{N}{\beta}}\Vert z\Vert_{L^{\beta}(K(2\rho))}\leq C \min\limits_{K(\rho)}z,
\end{equation}
 for $\beta<\frac{N(p_{1}-1)}{N-p_{1}}$, if $p_{1}\leq N$, for any $\beta$, if $p_{1}>N$.
\end{theorem}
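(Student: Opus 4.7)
The plan is to follow the classical Moser iteration scheme for weak Harnack inequalities, adapted to the anisotropic $p$-Laplace operator as in the cited works \cite{R3,R4}. The argument decomposes into three ingredients: an $L^\infty$ bound on $z^{-1}$ on the inner cube, a reverse Hölder chain raising the integrability of $z$ on the outer cube, and a John--Nirenberg-type bridge comparing small positive and negative powers of $z$.

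For the first two ingredients, I would test \eqref{(11)} with $\phi = \eta^{p_N}(z+\varepsilon)^{-\sigma}$, where $\eta$ is a smooth cutoff between two concentric cubes $K(\rho')\subset K(\rho)$, $\varepsilon>0$ is an approximation parameter, and $\sigma\in\mathbb{R}$. Expanding $\partial_j \phi$, using the supersolution sign, and applying Young's inequality would yield the anisotropic Caccioppoli estimate
\begin{equation*}
\sum_{j\in\mathcal{E}} \int \eta^{p_N}(z+\varepsilon)^{-\sigma-1}\vert\partial_j z\vert^{p_j}\,dx \leq C \sum_{j\in\mathcal{E}}\int \eta^{p_N - p_j}\vert\partial_j\eta\vert^{p_j}(z+\varepsilon)^{p_j - \sigma - 1}\,dx.
\end{equation*}
Combining this with the anisotropic Sobolev embedding \eqref{(5)} from Lemma \ref{L1}, applied to appropriate powers of $\eta$ and $z+\varepsilon$, produces a reverse Hölder inequality with gain $\bar{p}^*/\bar{p}$. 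For $\sigma>0$, iterating on a geometrically shrinking sequence of cubes with exponents $\sigma_k=\sigma_0(\bar{p}^*/\bar{p})^k$ gives the bound $\Vert(z+\varepsilon)^{-1}\Vert_{L^\infty(K(\rho))}\leq C\rho^{-N/q_0}\Vert(z+\varepsilon)^{-q_0}\Vert_{L^1(K(2\rho))}^{1/q_0}$ for some $q_0>0$. The same scheme with the sign of $\sigma$ reversed (so that $z+\varepsilon$ appears with a positive power) yields the parallel chain $\Vert z\Vert_{L^\beta(K(2\rho))}^{1/\beta}\leq C\rho^{N(1/\beta-1/p_0)}\Vert z\Vert_{L^{p_0}(K(2\rho))}^{1/p_0}$, valid for any $\beta<N(p_1-1)/(N-p_1)$ when $p_1\leq N$, and for any $\beta$ when $p_1>N$.

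To close the loop I would test \eqref{(11)} with $\phi=\eta^{p_N}(z+\varepsilon)^{-1}$, which after Young's inequality yields the logarithmic gradient bound
\begin{equation*}
\sum_{j\in\mathcal{E}}\int_{K(2\rho)} \vert\partial_j \log(z+\varepsilon)\vert^{p_j}\,dx \leq C\rho^{N-\bar{p}},
\end{equation*}
placing $\log(z+\varepsilon)$ in a suitable anisotropic BMO-type space. An anisotropic John--Nirenberg lemma (cf.\ \cite{R3}) then furnishes universal $p_0,q_0>0$ and $c>0$ such that $\Vert z\Vert_{L^{p_0}(K(2\rho))}\,\Vert z^{-1}\Vert_{L^{q_0}(K(2\rho))}\leq c\rho^{N/p_0+N/q_0}$. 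Concatenating the three estimates and passing $\varepsilon\to 0^+$ (legitimate since $z<M$) yields \eqref{(12)}. The main technical obstacle is the anisotropic Caccioppoli step: the direction-dependent exponents $p_j$ force careful bookkeeping of the $\rho$-scaling and of the signs of $p_j-\sigma-1$, since a single Young's inequality must absorb $N$ different terms simultaneously; in particular, the critical exponent $N(p_1-1)/(N-p_1)$ arises from the ``weakest'' direction (smallest $p_j=p_1$) when closing the positive-power iteration.
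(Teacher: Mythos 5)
A preliminary remark: the paper offers no proof of Theorem \ref{T1} at all — it is quoted from \cite{R3,R4} (Di Castro's thesis and Trudinger's 1967 paper) and used as a black box in Lemma \ref{LL3} — so there is no internal argument to compare yours against; your sketch can only be judged as a reconstruction of the cited proof. The overall skeleton you propose (negative-power Moser iteration for the infimum, a positive-power reverse H\"older chain, and a logarithmic estimate plus John--Nirenberg to bridge small positive and negative powers) is indeed the classical Trudinger architecture that \cite{R3} adapts. Two of your steps, however, would fail as written. First, the positive-power chain cannot be obtained by ``reversing the sign of $\sigma$'': the supersolution inequality \eqref{(11)} is one-sided, and testing with $\eta^{p_N}(z+\varepsilon)^{s}$, $s>0$, puts the gradient term on the favourable side of the inequality, so nothing can be absorbed and no Caccioppoli estimate results. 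In the classical argument the positive powers are still produced by \emph{negative} test exponents: with $\phi=\eta^{p}(z+\varepsilon)^{\beta}$, $-(p-1)<\beta<0$, the controlled quantity is $(z+\varepsilon)^{\beta+p-1}$, whose exponent sweeps $(0,p-1)$, and one Sobolev step multiplies it by $N/(N-p)$; this is exactly where the cap $\beta<N(p_1-1)/(N-p_1)$ comes from. Your sketch instead derives its gain $\overline{p}^{*}/\overline{p}$ from the anisotropic embedding \eqref{(5)}, which would naturally produce a threshold expressed through $\overline{p}$, not $p_1$; the closing sentence attributing the $p_1$-threshold to the ``weakest direction'' is not supported by the mechanism you describe, so the sketch is internally inconsistent on precisely the exponent the theorem asserts.

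Second, the hypothesis $z<M$ is not there to justify letting $\varepsilon\to0^{+}$ (monotone or dominated convergence handles that without any upper bound on $z$); it is what makes the anisotropic bookkeeping possible at all. Because each direction carries its own exponent, your Caccioppoli right-hand side contains the incompatible powers $(z+\varepsilon)^{p_j-\sigma-1}$, and no single substitution of the form $w=(z+\varepsilon)^{(\beta+p-1)/p}$ turns the estimate into a reverse H\"older inequality. In \cite{R3} the boundedness $z<M$, together with $p_j\ge p_1\ge 2$, is used to absorb the spread of the exponents (for instance $(z+\varepsilon)^{p_j-p_1}\le C(M)$ on the right and $|\partial_j z|^{p_j}\ge |\partial_j z|^{p_1}-1$ on the left), reducing the iteration to a $p_1$-growth structure; this reduction is the reason the statement is formulated with $p_1$ and with the dichotomy $p_1\le N$ versus $p_1>N$. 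Without this step (or an equivalent device) the iteration you outline does not close and does not yield the stated exponent, so the proposal has a genuine gap at its core rather than a mere bookkeeping issue.
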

%%%%%%%%%%%%%%%%%%%%%%%%%%%%%%%%%%%%%%%%%%%%%%%%%%%%%
\par The following is obvious from the previous Theorem.
\begin{corollary}{\cite{R3}}
\label{C1}
Under the assumption that $p_{1} \geq 2$, any weak non-negative solution of \eqref{(10)} must either be strictly positive in $\mathcal{D}$ or identically zero.
\end{corollary}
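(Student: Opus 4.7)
The plan is to derive the dichotomy as a direct consequence of the weak Harnack inequality (Theorem \ref{T1}) via a standard connectedness argument on the zero set of $z$. Let $z$ be a weak non-negative solution of \eqref{(10)} and suppose $z \not\equiv 0$; the goal is to show that $z > 0$ throughout $\mathcal{D}$.

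First I would observe that any non-negative weak solution of \eqref{(10)} is automatically a weak supersolution in the sense of Definition \ref{D1}. Indeed, for any $\phi \in C^\infty_0(\mathcal{D})$ with $\phi \geq 0$, the right-hand side $\lambda |z|^{q-2} z \, \phi = \lambda z^{q-1}\phi$ is non-negative (using $\lambda > 0$ and $z\geq 0$), so the equality in the weak formulation yields the inequality \eqref{(11)}. Next, standard regularity theory for anisotropic $p$-Laplace-type operators (De Giorgi/Moser iteration, as developed in the references \cite{R3,R4} that underpin Theorem \ref{T1}) guarantees that $z$ admits a continuous representative and is locally bounded in $\mathcal{D}$; I fix this continuous representative from now on, which permits pointwise statements like $z(x_0)=0$.

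The core step is then a contradiction argument. Suppose, for contradiction, that there exists $x_0 \in \mathcal{D}$ with $z(x_0) = 0$. Choose $\rho>0$ small enough that $K(2\rho)\Subset \mathcal{D}$ and $x_0\in K(\rho)$. Applying Theorem \ref{T1} (with some admissible $\beta$) one obtains
\begin{equation*}
\rho^{-\frac{N}{\beta}}\,\|z\|_{L^\beta(K(2\rho))}\;\leq\; C\,\min_{K(\rho)} z \;\leq\; C\, z(x_0) \;=\; 0,
\end{equation*}
which forces $z\equiv 0$ on $K(2\rho)$. Consequently, the zero set $Z:=\{x\in\mathcal{D}:z(x)=0\}$ is open in $\mathcal{D}$. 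On the other hand, $Z$ is closed in $\mathcal{D}$ by continuity of $z$. Assuming $\mathcal{D}$ is connected (otherwise one argues on each connected component separately), this forces either $Z=\mathcal{D}$ or $Z=\emptyset$. The first option contradicts $z\not\equiv 0$, so $Z=\emptyset$, i.e.\ $z>0$ everywhere in $\mathcal{D}$.

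The main technical step to watch is the legitimate application of Theorem \ref{T1}: one must ensure that $z$ is locally bounded (so that the hypothesis $z<M$ in $\mathcal{D}$, at least locally in $K(2\rho)$, is met) and that $\beta$ can be chosen within the admissible range $\beta<\frac{N(p_1-1)}{N-p_1}$ (or arbitrary if $p_1>N$); both are supplied by the regularity results in \cite{R3,R4}. Apart from that, the argument is essentially topological: the weak Harnack inequality propagates the vanishing of $z$ from a single point to an open neighbourhood, and openness plus closedness of the vanishing set on a connected domain yields the advertised dichotomy.
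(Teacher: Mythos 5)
Your argument is correct and is essentially the route the paper intends: the paper offers no proof of Corollary \ref{C1}, declaring it ``obvious'' from the weak Harnack inequality of Theorem \ref{T1} (citing \cite{R3}), and your propagation-of-vanishing plus connectedness argument is exactly that derivation. The only soft spot is the appeal to a continuous representative and local boundedness, which is delicate for anisotropic operators under the bare assumptions \eqref{(01.2)}; this can be sidestepped by working measure-theoretically (cover $\mathcal{D}$ by the two disjoint open sets where $z=0$ a.e.\ in a neighbourhood and where $\operatorname{ess\,inf} z>0$ on a neighbourhood, the Harnack inequality showing these exhaust $\mathcal{D}$), while the boundedness hypothesis $z<M$ of Theorem \ref{T1} should simply be regarded as inherited by the corollary.
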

%%%%%%%%%%%%%%%%%%%%%%%%%%%%%%%%%%%%%%%%%%%%%%%%%%%%
 
%%%%%%%%%%%%%%%%%%%%%%%%%%%%%%%%%%%%%%%%%%%%%%%%%%%%
\section{Proof of the Main result}
\subsection{Approximation of problem \eqref{fhpr}}
%%%%%%%%%%%%%%%%%%%%%%%%%%%%%%%%%%%%%%%%%%%%%%%%%%%%%
For $n \in \mathbb{N}$, we approximate problem \eqref{fhpr} by considering the following regularized (non-singular) problem
\begin{equation}
 \label{(2.1)}
\left\{\begin{array}{ll} \displaystyle -\sum_{j\in\mathcal{E}}\partial_{j}\left([b(x)+u_{n}^{q}]\vert \partial_{j} u_{n} \vert^{p_{j}-2} \partial_{j} u_{n}\right)= \frac{f_{n}}{\left(u_n +\frac{1}{n}\right)^{\gamma}} & \hbox{in}\;\;\mathcal{D}, \\
u_{n}>0 & \hbox{in}\;\; \mathcal{D},\\
u_{n}=0 & \hbox{on}\;\; \partial\mathcal{D},
 \end{array}
 \right.
\end{equation}
where $f_{n} = T_{n}(f)\in L^{\infty}(\mathcal{D})$ is a sequence of bounded functions that converges to $f > 0$ in $L^{1}(\mathcal{D})$. 
%%%%%%%%%%%%%%%%%%%%%%%%%%%%%%%%%%%%%%%%%%%%%%%%%%%%%
\begin{lemma}
\label{L2.1}
Under assumptions \eqref{(1.2)} and \eqref{(1.3)}, problem \eqref{(2.1)} admits a non-negative solution $u_n \in W_{0}^{1, (p_{j})}(\mathcal{D}) \cap L^{\infty}(\mathcal{D})$ satisfying
\begin{align}
\label{(2.2)}
&\displaystyle\sum_{j\in\mathcal{E}}\int_{\mathcal{D}} \left[b(x)+u_{n}^{q}\right]\vert \partial_{j} u_{n}\vert^{p_{j}-2} \partial_{j} u_{n}\partial_{j}\varphi \;dx =\int_{\mathcal{D}}\frac{f_{n}\varphi \;dx}{\left(u_{n}+\frac{1}{n}\right)^{\gamma}}, 
\end{align}
for all $\varphi\in W_{0}^{1,(p_{j})}(\mathcal{D})\cap L^{\infty}(\mathcal{D})$.
\end{lemma}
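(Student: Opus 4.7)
The plan is to obtain $u_n$ via Schauder's fixed-point theorem, after first truncating the coefficient $u^q$ appearing in the principal part of \eqref{(2.1)}; the main difficulty is that this coefficient is a priori unbounded, so the operator associated with a naively frozen linearisation need not map $W_0^{1,(p_j)}(\mathcal{D})$ into its dual. The key preliminary observation is a Stampacchia-type bound: any weak solution $w\in W_0^{1,(p_j)}(\mathcal{D})$ of
\begin{equation*}
-\sum_{j\in\mathcal{E}}\partial_j\bigl([b(x)+c(x)]|\partial_j w|^{p_j-2}\partial_j w\bigr)=h,
\end{equation*}
with $c\ge 0$ measurable and $0\le h\le K$ in $L^\infty(\mathcal{D})$, satisfies $\|w\|_{L^\infty(\mathcal{D})}\le M(K,\alpha,(p_j),|\mathcal{D}|)$, as one sees by testing with $G_k(w)$, invoking only the lower bound $b(x)+c(x)\ge\alpha$ for coercivity, and applying the anisotropic Sobolev inequality \eqref{(5)}. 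Since $f_n\le n$ and $(u+\tfrac1n)^{-\gamma}\le n^\gamma$ when $u\ge 0$, one may take $K=n^{1+\gamma}$; fix any $M>M_n:=M(n^{1+\gamma},\alpha,(p_j),|\mathcal{D}|)$.

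For $v\in L^{p_1}(\mathcal{D})$ define $w=\mathcal{T}(v)\in W_0^{1,(p_j)}(\mathcal{D})$ as the unique weak solution of
\begin{equation*}
-\sum_{j\in\mathcal{E}}\partial_j\bigl([b(x)+T_M(|v|)^q]|\partial_j w|^{p_j-2}\partial_j w\bigr)=\frac{f_n}{\bigl(T_M(|v|)+\tfrac1n\bigr)^\gamma}\quad\text{in }\mathcal{D},
\end{equation*}
with $w=0$ on $\partial\mathcal{D}$. Since the coefficient is now bounded between $\alpha$ and $\beta+M^q$ and the right-hand side is in $L^\infty(\mathcal{D})$, classical Leray--Lions theory for anisotropic $(p_j)$-Laplacians with bounded positive measurable coefficient produces a unique such $w$, and testing with $w^-$ yields $w\ge 0$. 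Taking $w$ itself as test function together with \eqref{(5)} gives a bound on $\|w\|_{W_0^{1,(p_j)}(\mathcal{D})}$ independent of $v$, so by the compact embedding $W_0^{1,(p_j)}(\mathcal{D})\hookrightarrow L^{p_1}(\mathcal{D})$ the map $\mathcal{T}$ is compact with image contained in a fixed closed ball. For continuity, if $v_k\to v$ in $L^{p_1}(\mathcal{D})$ then $T_M(|v_k|)\to T_M(|v|)$ in every $L^s(\mathcal{D})$, the $w_k=\mathcal{T}(v_k)$ are bounded in $W_0^{1,(p_j)}(\mathcal{D})$ and, up to a subsequence, converge weakly and almost everywhere to a limit that a standard Minty-type argument, based on the monotonicity of $\xi\mapsto|\xi|^{p_j-2}\xi$, identifies with $\mathcal{T}(v)$; uniqueness then propagates convergence to the whole sequence.

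Schauder's theorem now produces a fixed point $u_n\in W_0^{1,(p_j)}(\mathcal{D})$ of $\mathcal{T}$. Since $u_n\ge 0$, applying the Stampacchia bound above with $c=T_M(u_n)^q$ and $h=f_n\bigl(T_M(u_n)+\tfrac1n\bigr)^{-\gamma}$ gives $\|u_n\|_{L^\infty(\mathcal{D})}\le M_n<M$, so $T_M(u_n)=u_n$ and $u_n$ actually solves \eqref{(2.1)}; the weak formulation \eqref{(2.2)} extends from $C_c^\infty(\mathcal{D})$ to $\varphi\in W_0^{1,(p_j)}(\mathcal{D})\cap L^\infty(\mathcal{D})$ by density, the bound $\|u_n\|_\infty\le M_n$ controlling the growth of the lower-order factor $u_n^q$. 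Finally, because $f\not\equiv 0$, the right-hand side of \eqref{(2.1)} is not identically zero, so $u_n\not\equiv 0$, and the weak Harnack inequality of Theorem~\ref{T1}---applicable since $\alpha\le b(x)+u_n^q\le\beta+M_n^q$---together with Corollary~\ref{C1} upgrades $u_n\ge 0$ to $u_n>0$ in $\mathcal{D}$. The hard part is precisely the decoupling between the truncation level $M$ and the a priori $L^\infty$ bound: the Stampacchia estimate must depend only on $\alpha$ and not on any upper bound of the coefficient, otherwise the choice $M>M_n$ needed to render the truncation inactive at the fixed point would be circular.
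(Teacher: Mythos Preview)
Your proof is correct and follows essentially the same scheme as the paper's: truncate the unbounded coefficient, apply Schauder's theorem to the resulting map, derive an $L^\infty$ bound on the fixed point that is independent of the truncation level (via a Stampacchia/$G_k$ argument using only the lower bound $\alpha$), and conclude that the truncation is inactive. The only cosmetic differences are that you establish the a~priori $L^\infty$ bound $M_n$ \emph{before} defining the map and also truncate the denominator, whereas the paper truncates only the coefficient (at a generic level $k$) and extracts the $L^\infty$ bound afterwards; and your final paragraph on strict positivity via Theorem~\ref{T1} is not part of this lemma's statement (the paper treats it separately in the next lemma, with an argument that reduces to the \emph{uncoefficiented} operator to which Theorem~\ref{T1} literally applies).
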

%%%%%%%%%%%%%%%%%%%%%%%%%%%%%%%%%%%%%%%%%%%%%%%%%%%%
\begin{proof}
For fixed $n \geq 1$ and $k \geq 0$, let $v \in L^{\overline{p}}(\mathcal{D})$ and define $w = \mathcal{P}(v)$ as the unique solution of
\begin{equation} 
\label{(2.3)}
\left\{\begin{array}{ll}\displaystyle-\sum_{j\in\mathcal{E}}\partial_{j}\left(\left[b(x)+ T_{k}(v)^{q}\right]\vert \partial_{j} w\vert^{p_{j}-2} \partial_{j} w\right)=\frac{f_{n}}{\left(\vert v\vert+\frac{1}{n}\right)^{\gamma}}&\hbox{in}\;\mathcal{D}, \\
w=0 & \hbox{on}\; \partial\mathcal{D}, 
\end{array}
\right.
\end{equation}
Since problem \eqref{(2.3)} has a unique solution \cite{R1}, the operator $\mathcal{P}$ is well-defined. We aim to establish the existence of a fixed point for $\mathcal{P}$ via Schauder's fixed point theorem \cite{R2}.
\par  Picking $\varphi = w$ as a test function in \eqref{(2.3)}, and making use of \eqref{(1.2)} along with the fact that
$$
\left\vert\frac{f_{n}}{\left(\vert v\vert+\frac{1}{n}\right)^{\gamma}}\right\vert\leq n^{\gamma+1},\quad \vert T_{k}(v)\vert^{q}\geq 0,
$$
we have
$$
\alpha\sum_{j\in\mathcal{E}}\Vert \partial_{j}w\Vert_{L^{p_{j}}(\mathcal{D})}^{p_{j}}\leq n^{\gamma+1} \int_{\mathcal{D}} \vert w\vert \;dx.
$$
Using \eqref{(5)} on the left and H\"older’s inequality with exponent $\overline{p}^{\star}$ on the right, we obtain
\begin{align}
\label{(13)}
\Vert w\Vert_{L^{\overline{p}^{\star}}(\mathcal{D})}^{p_{N}}&\leq \sum_{j\in\mathcal{E}}\Vert \partial_{j}u_{n}\Vert_{L^{p_{j}}(\mathcal{D})}^{p_{j}}\leq C(n,\gamma,\alpha)\vert \mathcal{D}\vert^{\frac{1}{(\overline{p}^{\star})'}}\Vert w\Vert_{L^{\overline{p}^{\star}}(\mathcal{D})}.
\end{align}
Since $p_N > 1$, there exists a positive constant $R(n, |\mathcal{D}|, \gamma, \alpha)$ that is independent of $v$ and $w$, such that
\begin{align}
\label{(1414)}
\Vert w\Vert_{L^{\overline{p}^{\star}}(\mathcal{D})}\leq R(n,\vert\mathcal{D}\vert,\gamma,\alpha).
\end{align}
Observing that $\overline{p}<\overline{p}^{\star}$, then
\begin{align}
\label{(14)}
\Vert w\Vert_{L^{\overline{p}}(\mathcal{D})}\leq R(n,\vert\mathcal{D}\vert,\gamma,\alpha).
\end{align}
Therefore, equation \eqref{(14)} implies that the ball $B$ in $L^{\overline{p}}(\mathcal{D})$, with radius $R(n, |\mathcal{D}|, \gamma, \alpha)$, remains invariant under the mapping $\mathcal{P}$.\\
\par \textit{Observation 1:} $\mathcal{P}(L^{p}(\mathcal{D}))$ has relative compactness in $L^{\overline{p}}(\mathcal{D})$.\\
Using equations \eqref{(13)} and \eqref{(1414)}, we can deduce that
\begin{align*}
\sum_{j\in\mathcal{E}}\int_{\mathcal{D}}\vert \partial_{j}w\vert^{p_{j}}\;dx=\sum_{j\in\mathcal{E}}\int_{\mathcal{D}}\vert \partial_{j}\mathcal{P}(v)\vert^{p_{j}}\;dx\leq R(n,\vert\mathcal{D}\vert,\gamma,\alpha),\quad \forall v\in L^{\overline{p}}(\mathcal{D}).
\end{align*}
By Sobolev embedding, $\mathcal{P}(L^{\overline{p}}(\mathcal{D}))$ can be shown to be compact in $L^{\overline{p}}(\mathcal{D})$.
\par \textit{Observation 2:} $\mathcal{P}$ is a continuous operator.\\
Let $v_r,$ for any $r\in \N,$ be a sequence converging to $v$ in $L^{\overline{p}}(\mathcal{D})$. By the dominated convergence theorem, it follows that
$$\frac{f_{n}}{\left(\vert v_{r}\vert+\frac{1}{n}\right)^{\gamma}} \rightarrow \frac{f_{n}}{\left(\vert v\vert+\frac{1}{n}\right)^{\gamma}}\;\; \mbox{strongly in}\; L^{p}(\mathcal{D}).$$ 
By uniqueness, $w_{r}:=\mathcal{P}(v_{r})$ converges to $w:=\mathcal{P}(v)$ in $L^{\overline{p}}(\mathcal{D})$. Applying Schauder's fixed point theorem, for each fixed $n$, there exists $u_{n,k}\in W_{0}^{1,p}(\mathcal{D})$ such that $\mathcal{P}(u_{n,k})=u_{n,k}$, with $u_{n,k}\in L^{\infty}(\mathcal{D})$ for all $n,k\in \mathbb{N}$. Indeed, for fixed $h\geq 1$, using $G_{h}(u_{n,k})$ as a test function and noting that $u_{n,k}+\frac{1}{n}\geq h\geq 1$ on $\{u_{n,k}\geq h\}$, we obtain
$$\sum_{j\in\mathcal{E}}\int_{\mathcal{D}}\vert \partial_{j}G_{h}(u_{n,k})\vert^{p_{j}}\;dx\leq \int_{\mathcal{D}} f_{n}G_{k}(u_{n,k})\;dx.$$
We apply the standard technique from \cite{Dicastro} to obtain $u_{n,k} \in L^{\infty}(\mathcal{D})$. Moreover, the $L^{\infty}$ estimate is independent of $k$, so for sufficiently large $k$ and fixed $n$, $u_n \in W_{0}^{1, (p_{j})}(\mathcal{D}) \cap L^{\infty}(\mathcal{D})$ solves \eqref{(2.1)}.
\par Choosing $\varphi = u_n^{-} = \min\{u_n, 0\}$ in \eqref{(2.1)} and applying \eqref{(1.2)}, we observe that $\frac{f_n}{\left(u_{n}+\frac{1}{n}\right)^{\gamma}}$ is nonnegative. Thus,  
\begin{equation*}
\alpha\sum_{j\in\mathcal{E}}\int_{\mathcal{D}}\vert \partial_i u_{n}^{-}\vert^{p_{j}}\;dx\leq\int_{\mathcal{D}} \frac{f_{n} \;dx}{\left(u_{n}+\frac{1}{n}\right)^{\gamma}} u_{n}^{-}\leq 0.
\end{equation*}
This implies $u_n^- = 0$ a.e. in $\mathcal{D}$, leading to $u_n \geq 0$ a.e. in $\mathcal{D}$.
\end{proof}
%%%%%%%%%%%%%%%%%%%%%%%%%%%%%%%%%%%%%%%%%%%%%%%%%%%%
\par We show in the following Lemma that $u_{n}$ remains strictly positive in $\mathcal{D}$.
%%%%%%%%%%%%%%%%%%%%%%%%%%%%%%%%%%%%%%%%%%%%%%%%%%%%
\begin{lemma} 
\label{LL3}
Let $u_{n}$ solve \eqref{(2.1)}. Then, there exists a constant $\tilde{C}>0$, independent of $n$, such that $u_{n} \geq \tilde{C}$ a.e. in $\mathcal{D}$ for all $ n \in \mathbb{N}$.
\end{lemma}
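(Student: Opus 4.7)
The strategy has two stages: (i) show $u_n \leq u_{n+1}$ a.e.\ in $\mathcal{D}$ for every $n$, reducing the uniform lower bound to a positive lower bound for $u_1$; (ii) deduce strict positivity of $u_1$ via a strong maximum principle / weak Harnack argument based on Corollary~\ref{C1} and Theorem~\ref{T1}. For stage (i), I will subtract the weak formulations of \eqref{(2.1)} at indices $n$ and $n+1$ and test with $\varphi=(u_n-u_{n+1})^+\in W_0^{1,(p_j)}(\mathcal{D})\cap L^\infty(\mathcal{D})$, which is admissible by Lemma~\ref{L2.1}. On $\{u_n>u_{n+1}\}$ the right-hand side is nonpositive, because $(n,s)\mapsto f_n/(s+1/n)^\gamma$ is non-increasing in $s$ and non-decreasing in $n$ (as $f_n\leq f_{n+1}$ and $1/n\geq 1/(n+1)$). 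On the left, the principal part decomposes as
$$
(b+u_n^q)\bigl(|\partial_j u_n|^{p_j-2}\partial_j u_n-|\partial_j u_{n+1}|^{p_j-2}\partial_j u_{n+1}\bigr)+(u_n^q-u_{n+1}^q)|\partial_j u_{n+1}|^{p_j-2}\partial_j u_{n+1};
$$
the first piece pairs nonnegatively with $\partial_j(u_n-u_{n+1})^+$ by the monotonicity of $\xi\mapsto|\xi|^{p-2}\xi$ for $p\geq 2$, while the "cross" piece is absorbed by a Young's inequality exploiting the $L^\infty$-bounds on $u_n,u_{n+1}$ and the coercivity constant $\alpha$; this forces $\partial_j(u_n-u_{n+1})^+\equiv 0$ for every $j$, hence $(u_n-u_{n+1})^+\equiv 0$.

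For stage (ii), stage (i) gives $u_n\geq u_1$ a.e., so it suffices to lower-bound $u_1$. Since $f\not\equiv 0$, $f_1=T_1(f)\not\equiv 0$, and the equation forces $u_1\not\equiv 0$. Moreover the nonnegativity of $f_1/(u_1+1)^\gamma$ shows that $u_1$ is a weak nonnegative supersolution of $v\mapsto -\sum_j\partial_j\bigl[(b+v^q)|\partial_j v|^{p_j-2}\partial_j v\bigr]$, whose coefficient satisfies $\alpha\leq b+u_1^q\leq\beta+\|u_1\|_\infty^q$. Testing against $\varphi/(b+u_1^q)$ for $\varphi\geq 0$ (justified by mollification in the measurable weight $b$) reduces the supersolution inequality to the unweighted anisotropic $p$-Laplacian setting of \eqref{(10)}. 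Corollary~\ref{C1} then yields $u_1>0$ in $\mathcal{D}$, and the weak Harnack inequality of Theorem~\ref{T1} upgrades this to a quantitative lower bound $u_1\geq\tilde C>0$ on every compact subset $K\subset\mathcal{D}$, with $\tilde C$ independent of $n$. Combining with stage (i) yields $u_n\geq\tilde C$ on $K$ for every $n\in\mathbb{N}$.

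The delicate step is the cross term $(u_n^q-u_{n+1}^q)|\partial_j u_{n+1}|^{p_j-2}\partial_j u_{n+1}\,\partial_j(u_n-u_{n+1})^+$ in stage (i): the $u^q$-dependence in the principal operator spoils the textbook Leray-Lions monotonicity, and a carefully calibrated Young's inequality (using exponents $p_j$ and $p_j'$ together with the $L^\infty$-bounds on $u_n,u_{n+1}$) is required to absorb it into the coercive piece $\alpha\bigl(|\partial_j u_n|^{p_j-2}\partial_j u_n-|\partial_j u_{n+1}|^{p_j-2}\partial_j u_{n+1}\bigr)\partial_j(u_n-u_{n+1})^+$. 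A secondary subtlety is the reduction in stage (ii) from the weighted supersolution inequality to the unweighted anisotropic setting of Theorem~\ref{T1}, which needs care with the test function $\varphi/(b+u_1^q)$ since $b$ is merely measurable.
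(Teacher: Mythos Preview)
Both stages of your strategy have genuine gaps. In stage~(i), the Young's-inequality absorption of the cross term does not force $(u_n-u_{n+1})^+=0$: after absorption you are left with an inequality of the shape
\[
c\sum_{j\in\mathcal{E}}\int_{\mathcal D}\bigl|\partial_j(u_n-u_{n+1})^+\bigr|^{p_j}\,dx
\;\le\;C\sum_{j\in\mathcal{E}}\int_{\{u_n>u_{n+1}\}}\bigl|u_n^q-u_{n+1}^q\bigr|^{p_j'}\,|\partial_j u_{n+1}|^{p_j}\,dx,
\]
whose right-hand side is in general strictly positive and is not controlled by the left-hand side, so nothing follows. The map $u\mapsto -\sum_j\partial_j\bigl[(b(x)+u^q)|\partial_j u|^{p_j-2}\partial_j u\bigr]$ is \emph{not} monotone in $u$ because of the $u^q$ coefficient, and no change of variable removes it (the merely measurable $x$-dependent part $b$ and the distinct exponents $p_j$ obstruct any such substitution). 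The comparison $u_n\le u_{n+1}$ is therefore not available here, and the paper neither states nor uses it. In stage~(ii), the test function $\varphi/(b+u_1^q)$ is inadmissible: by \eqref{(1.2)} one only knows $\alpha\le b\le\beta$, so $b$ has no weak derivatives, and mollifying $b$ alters the equation and destroys the supersolution inequality you need.

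The paper bypasses both obstacles by treating each $u_n$ directly, without any comparison between different indices. Testing \eqref{(2.1)} with $\Theta_\sigma(u_n)\phi$, where $\Theta_\sigma$ equals $1$ on $[0,1]$ and vanishes on $[1+\sigma,\infty)$, and letting $\sigma\to0$ localizes the inequality to $\{u_n\le1\}$. On that set one has $b+T_1(u_n)^q\le\beta+1$ and $f_n/(u_n+1/n)^\gamma\ge 2^{-\gamma}T_1(f)$, both bounds independent of $n$; dropping the nonnegative right-hand side shows that $T_1(u_n)$ is a weak supersolution of the \emph{unweighted} anisotropic $p$-Laplacian, with the uniform cap $T_1(u_n)\le1$. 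Theorem~\ref{T1} and Corollary~\ref{C1} then apply with constants that do not depend on $n$, which is exactly what yields the uniform lower bound.
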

%%%%%%%%%%%%%%%%%%%%%%%%%%%%%%%%%%%%%%%%%%%%%%%%%%%%
\begin{proof}
For $s \geq 0$, consider the function $\Theta_{\sigma}(s)$ defined as follows
$$\Theta_{\sigma}(s)=\left\{\begin{array}{ll}1&\hbox{if}\;0\leq s\leq1, \\
\frac{1}{\sigma}(1+\sigma-s) & \hbox{if}\; 1< s\leq\sigma+1,\\ 
0 & \hbox{if}\; s>\sigma+1,
\end{array}
\right.$$
Using $\Theta_{\sigma}(s)\phi$ as a test function in \eqref{(2.1)}, where $\phi \in W_{0}^{1,(p_{j})}(\mathcal{D}) \cap L^{\infty}(\mathcal{D})$ with $\phi \geq 0$, we obtain
\begin{align*}
&\sum_{j\in\mathcal{E}}\int_{\mathcal{D}}(b(x)+u_{n}^{q})\vert \partial_{j}u_{n}\vert^{p_{j}-2}\partial_{j}u_{n}\partial_{j}\phi\Theta_{\sigma}(u_{n})\;dx\\
&\quad=\frac{1}{\sigma}\int_{\mathcal{D}\cap \{1\leq u_{n}\leq \sigma+1\}}(b(x)+u_{n}^{q})\vert \partial_{j}u_{n}\vert^{p_{j}}\phi \;dx +\int_{\mathcal{D}}\frac{f_{n}}{\left(u_{n}+\frac{1}{n}\right)^{\gamma}}\Theta_{\sigma}(u_{n})\phi \;dx.
\end{align*}
Omitting the non-negative term on the right-hand side and letting $\sigma \to 0$, we derive
\begin{align*}
&\sum_{j\in\mathcal{E}}\int_{\mathcal{D}}(b(x)+u_{n}^{q})\vert \partial_{j}u_{n}\vert^{p_{j}-2}\partial_{j}u_{n}\partial_{j}\phi\chi_{\{0\leq u_{n}\leq1\}}\;dx\geq \int_{\mathcal{D}}\frac{f_{n} \;dx}{\left(u_{n}+\frac{1}{n}\right)^{\gamma}}\phi\chi_{\{0\leq u_{n}\leq1\}}.
\end{align*}
Using the fact that $\frac{f_{n}}{\left(u_{n}+\frac{1}{n}\right)^{\gamma}}\geq \frac{T_{1}(f)}{2^{\gamma}}$ on the set $\{0\leq u_{n}\leq1\}$, we have
\begin{align*}
&\sum_{j\in\mathcal{E}}\int_{\mathcal{D}}(b(x)+T_{1}(u_{n})^{q})\vert \partial_{j}T_{1}(u_{n})\vert^{p_{j}-2}\partial_{j}T_{1}(u_{n})\partial_{j}\phi \;dx\geq \frac{1}{2^{\gamma}}\int_{\mathcal{D}}T_{1}(f)\phi\chi_{\{0\leq u_{n}\leq1\}}\;dx.
\end{align*}
Since $b(x)+T_{1}(u_{n})^{q}\leq \beta+1$, we get
\begin{align*}
&(\beta+1)\sum_{j\in\mathcal{E}}\int_{\mathcal{D}}\vert \partial_{j}T_{1}(u_{n})\vert^{p_{j}-2}\partial_{j}T_{1}(u_{n})\partial_{j}\phi \;dx\geq 0.
\end{align*}
This yields that $T_{1}(u_{n})$ is a super-solution solution of the following problem
\begin{equation*}
\left\{\begin{array}{ll} \displaystyle -\sum_{j\in\mathcal{E}}\partial_{j}\left(\vert \partial_{j} Z \vert^{p_{j}-2} \partial_{j} Z\right)= 0 & \hbox{in}\;\;\mathcal{D}, \\
Z=0 & \hbox{on}\;\; \partial\mathcal{D},
 \end{array}
 \right.
\end{equation*}
It follows from Theorem \ref{T1} and Corollary \ref{C1} that $Z > 0$ in $\mathcal{D}$, and therefore, $T_{1}(u_{n}) > 0$ in $\mathcal{D}$. By the strict monotonicity of $ T_{1}$, there exists $\tilde{C} > 0$ such that $u_{n} \geq \tilde{C}$ in $\mathcal{D} $ for all $n \in \mathbb{N}$.
\end{proof}
%%%%%%%%%%%%%%%%%%%%%%%%%%%%%%%%%%%%%%%%%%%%%%%%%%%%
%\section{Proof of Theorem \ref{theo1}}
\subsection{A priori estimates}
%%%%%%%%%%%%%%%%%%%%%%%%%%%%%%%%%%%%%%%%%%%%%%%%%%%%
In this subsection, $C$ denotes a positive constant independent of $n$, which may vary between lines.
\begin{lemma}
\label{LL1}
Let $u_n$ be a solution of \eqref{(2.1)}, with $f \in L^{1}(\mathcal{D})$. Then
\begin{itemize}
\item[(1)] If $0 < \gamma < 1$:
\begin{itemize}
\item[(i)]  $u_{n}$ is bounded in $W_{0}^{1, (p_{j})}(\mathcal{D})$ for $q> 1-\gamma$.
\item[(ii)] $u_{n}$ is bounded in $W_{0}^{1,(p_{j})}(\mathcal{D})$ for $q=1-\gamma$.
\item[(iii)] $u_{n}$ is bounded in  $W_{0}^{1, (\eta_{j})}(\mathcal{D})$ for $q<1-\gamma$, with 
$$\eta_{j}=\frac{N(q+\gamma+\overline{p}-1)}{[N-(1-(q+\gamma))]\overline{p}}p_{j}\quad \forall\; j\in\mathcal{E}.$$
\end{itemize}
\item[(2)] If $\gamma>1$ and $q\geq0$, then $u_{n}$ is bounded in $W_{0}^{1, (p_{j})}(\mathcal{D})\cap L^{r(\gamma)}(\mathcal{D})$, with
$$r(\gamma)=\frac{N(q+\gamma-1+\overline{p})}{N-\overline{p}}.$$
\item[(3)] If $\gamma=1$ and $q\geq0$, then $u_{n}$ is bounded in $W_{0}^{1, p}(\mathcal{D})\cap L^{r(1)}(\mathcal{D})$.
\end{itemize}
\end{lemma}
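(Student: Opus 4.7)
The strategy is to insert cleverly chosen powers of $u_n$ as test functions in the weak formulation \eqref{(2.2)}, obtaining weighted gradient estimates that I combine with the uniform lower bound $u_n\geq\tilde{C}>0$ from Lemma~\ref{LL3} and the anisotropic Sobolev-type inequalities from Lemmas~\ref{L1}--\ref{L2}. The universal starting point is the admissible choice $\varphi=u_n^{\gamma}$ (admissibility is granted by Lemmas~\ref{L2.1} and~\ref{LL3}). Using $(u_n+1/n)^{\gamma}\geq u_n^{\gamma}$ and discarding the nonnegative contribution of $b(x)$ yields the key weighted estimate
\begin{equation*}
\gamma\sum_{j\in\mathcal{E}}\int_{\mathcal{D}}u_n^{q+\gamma-1}\,|\partial_{j}u_n|^{p_j}\,dx\;\leq\;\|f\|_{L^1(\mathcal{D})}.
\end{equation*}

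For cases (i)--(ii) of $(\textbf{A}_{\gamma,q})$ one has $q+\gamma-1\geq 0$, and Lemma~\ref{LL3} yields $u_n^{q+\gamma-1}\geq\tilde{C}^{q+\gamma-1}$; combining this pointwise bound with the key estimate produces a uniform $W_0^{1,(p_j)}$-bound. For the $W_0^{1,(p_j)}$ part of $(\textbf{B}_{\gamma,q})$ and $(\textbf{C}_{\gamma,q})$ I test instead with $\varphi=u_n$ and control the right-hand side by $\int f_n\,u_n/(u_n+1/n)^{\gamma}\leq\tilde{C}^{1-\gamma}\|f\|_{L^1}$, using $\gamma\geq 1$ and Lemma~\ref{LL3}. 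The integrability claim $u_n\in L^{r(\gamma,q)}$ in $(\textbf{B}_{\gamma,q})$ and $(\textbf{C}_{\gamma,q})$ is then a direct application of Lemma~\ref{L2} with $z=v=u_n$, $\delta_j=p_j$ and the nonnegative weights $t_j=(q+\gamma-1)/p_j$: the constraints $\sum_{j}b_j=1$ and $\sum_{j}1/p_j=N/\overline{p}$ force $r=N(q+\gamma-1+\overline{p})/(N-\overline{p})=r(\gamma,q)$, and the right-hand product in \eqref{(9)} is controlled by the key estimate.

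The delicate case is (iii) of $(\textbf{A}_{\gamma,q})$, where $q+\gamma<1$ so that $q+\gamma-1<0$: the key estimate no longer controls $\int|\partial_j u_n|^{p_j}$ directly, and Lemma~\ref{L2} is unavailable because $t_j$ would be negative. I apply H\"older's inequality with conjugate pair $(p_j/\eta_j,\,p_j/(p_j-\eta_j))$ to the identity
\begin{equation*}
|\partial_j u_n|^{\eta_j}\;=\;\bigl(u_n^{q+\gamma-1}|\partial_j u_n|^{p_j}\bigr)^{\eta_j/p_j}\,u_n^{-(q+\gamma-1)\eta_j/p_j}.
\end{equation*}
The first factor is controlled by the key estimate. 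Writing $\eta_j=\lambda p_j$ with $\lambda=N(q+\gamma+\overline{p}-1)/[(N-1+q+\gamma)\overline{p}]$, a direct computation gives the crucial identity
\begin{equation*}
\frac{(1-q-\gamma)\lambda}{1-\lambda}\;=\;\frac{N(q+\gamma+\overline{p}-1)}{N-\overline{p}}\;=\;\overline{\eta}^{*},
\end{equation*}
where $\overline{\eta}=N/\sum_j(1/\eta_j)$ and $\overline{\eta}^{*}=N\overline{\eta}/(N-\overline{\eta})$ is the anisotropic Sobolev critical exponent for the $(\eta_j)$-scale. Hence the second H\"older factor equals $\|u_n\|_{L^{\overline{\eta}^{*}}}^{\overline{\eta}^{*}(1-\lambda)}$. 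Summing in $j$ and invoking inequality \eqref{(5)} of Lemma~\ref{L1} in the $(\eta_j)$-space produces the self-improving inequality
\begin{equation*}
\|u_n\|_{L^{\overline{\eta}^{*}}}^{\eta_N}\;\leq\;C\,\|u_n\|_{L^{\overline{\eta}^{*}}}^{\overline{\eta}^{*}(1-\lambda)}.
\end{equation*}
Since each $u_n\in L^{\infty}\subset L^{\overline{\eta}^{*}}$ for fixed $n$ by Lemma~\ref{L2.1}, and the elementary bound $\eta_N>\overline{\eta}^{*}(1-\lambda)$ (which reduces to $N(p_N-\overline{p})>\lambda\overline{p}(p_N-N)$ and holds under $N>\overline{p}$) allows dividing, uniform bounds on $\|u_n\|_{L^{\overline{\eta}^{*}}}$ and therefore on $\sum_j\int|\partial_j u_n|^{\eta_j}$ follow.

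The main obstacle is precisely this last step: the H\"older split and the explicit choice of $\lambda$ must be orchestrated so that the exponent appearing in the second factor matches $\overline{\eta}^{*}$ exactly, which is what makes the self-improving inequality actually close. Every other case reduces, after the key weighted estimate, to a direct application of either Lemma~\ref{LL3} (for the $W_0^{1,(p_j)}$ bounds) or Lemma~\ref{L2} (for the $L^{r(\gamma,q)}$ bounds).
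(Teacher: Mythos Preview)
Your proposal is correct and follows the paper's overall strategy---test with a power of $u_n$, then feed the resulting weighted gradient bound into the anisotropic Sobolev machinery---but there is one genuine methodological difference worth noting. For case (1)-(i) the paper does \emph{not} invoke Lemma~\ref{LL3}: it tests first with $u_n^{\gamma}\bigl(1-(1+u_n)^{1-(q+\gamma)}\bigr)$ to control $\int_{\{u_n\ge 1\}}|\partial_j u_n|^{p_j}$, then with $T_k(u_n)^{\gamma}$ to control $\int_{\{u_n<1\}}|\partial_j u_n|^{p_j}$, and combines. You instead short-circuit both steps via the global lower bound $u_n\ge\tilde C$, obtaining the $W_0^{1,(p_j)}$ estimate directly from $u_n^{q+\gamma-1}\ge\tilde C^{\,q+\gamma-1}$. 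Since the paper itself uses Lemma~\ref{LL3} in the same way for cases (2)--(3), your shortcut is legitimate within the paper's framework and yields a cleaner, more uniform argument; the paper's version of (1)-(i) has only the mild advantage of being independent of the strict-positivity lemma. For the $W_0^{1,(p_j)}$ part of (2)--(3) the paper tests with $u_n^{\gamma}$ and keeps the $b(x)u_n^{\gamma-1}$ term, whereas you test with $u_n$; both routes reach the same bound in one line. In (1)-(iii) the two arguments coincide up to the closing Sobolev inequality: the paper applies \eqref{(4)} and needs $(1-q-\gamma)/\overline{p}<1$, you apply \eqref{(5)} and need $\eta_N>\overline{\eta}^{*}(1-\lambda)$, which (using your own identity $(1-\lambda)\overline{\eta}^{*}=(1-q-\gamma)\lambda$) reduces simply to $p_N>1-q-\gamma$---immediate since $p_N\ge 2$.
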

%%%%%%%%%%%%%%%%%%%%%%%%%%%%%%%%%%%%%%%%%%%%%%%%%%%%
%%%%%%%%%%%%%%%%%%%%%%%%%%%%%%%%%%%%%%%%%%%%%%%%%%%%
\begin{proof} \textit{Proof of (1)-(i):}
Taking $u_n^{\gamma}\left(1-(1+u_{n})\right)^{1-(q+\gamma)}$ as a test function in \eqref{(2.1)}, leads to
\begin{align*}
&\gamma\sum_{j\in\mathcal{E}}\int_{\mathcal{D}}\left(b(x)+u_{n}^{q}\right)\vert \partial_{j} u_{n}\vert^{p_{j}} u_n^{\gamma -1} \left( 1-(1+u_n)^{1-(q+\gamma)}\right)\;dx \nonumber \\
&\qquad+(q+\gamma-1)\sum_{j\in\mathcal{E}}\int_{\mathcal{D}}\left(b(x)+u_{n}^{q}\right)u_n^{\gamma}\frac{\vert \partial_{j} u_{n}\vert^{p_{j}} }{(1+u_n)^{q+\gamma}}\;dx  \nonumber \\
&\quad=\int_{\mathcal{D}}\frac{f_{n}}{\left( u_{n}+\frac{1}{n}\right)^{\gamma}}u_n^{\gamma}\left(1-(1+u_{n})\right)^{1-(q+\gamma)}\;dx.
\end{align*}
By neglecting the first positive term on the left-hand side and using the fact that 
$$\frac{f_{n}}{\left( u_{n}+\frac{1}{n}\right)^{\gamma}}u_n^{\gamma}\left(1-(1+u_{n})\right)^{1-(q+\gamma)}\leq f,$$
we obtain
\begin{equation} 
\label{15}
\int_{\mathcal{D}}\left(b(x)u_{n}^{\gamma}+u_{n}^{q+\gamma}\right)\frac{\vert \partial_{j} u_{n}\vert^{p_{j}} }{(1+u_n)^{q+\gamma}}\;dx \leq C\Vert f\Vert_{L^{1}(\mathcal{D})},\quad \forall\; j\in\mathcal{E}.
\end{equation}
By working in $\{u_{n}\geq 1\}$, we have for all $j\in\mathcal{E}$
\begin{align}
\int_{\mathcal{D}}\left(\alpha u_{n}^{\gamma}+u_{n}^{q+\gamma}\right) \frac{\vert \partial_{j} u_{n}\vert^{p_{j}}}{\left(1+u_{n}\right)^{q+\gamma}}\;dx
 &\geq \frac{\min(1,\alpha)}{2^{q+\gamma-1}}\int_{\left\{u_{n} \geq 1\right\}} \vert \partial_{j}u_{n}\vert^{p_{j}}\;dx. \label{17}
\end{align}
Then it follows from \eqref{15} and \eqref{17} that
\begin{equation}
\int_{\left\{u_{n} \geq 1\right\}} \vert \partial_{j}u_{n}\vert^{p_{j}}\;dx \leq C\Vert f\Vert_{L^{1}(\mathcal{D})},\quad \forall\; j\in\mathcal{E}. \label{a}
\end{equation}
After that, we will be testing \eqref{(2.1)} by $T_{k}^{\gamma}\left(u_{n}\right)$ for all $k>0$, in order to obtain
\begin{align*}
& \gamma\sum_{j\in\mathcal{E}}\int_{\mathcal{D}}\left(b(x)+u_{n}^{q}\right)\frac{\vert \partial_{j} T_{k}(u_{n})\vert^{p_{j}}}{[T_{k}(u_{n})]^{1-\gamma}}\;dx 
= \int_{\mathcal{D}} \frac{f_{n}[T_{k}(u_{n})]^{\gamma}}{\left(u_{n}+\frac{1}{n}\right)^{\gamma}} \;dx
\end{align*}
For the left hand side, we dropping the positive term and use \eqref{(1.2)}, 
as for the right hand side, using that $T_k(u_n)\leq u_n$, we have
\begin{equation}
\label{aa}
\int_{\mathcal{D}}\frac{\vert \partial_{j} T_{k}(u_{n})\vert^{p_{j}}}{[T_{k}(u_{n})]^{1-\gamma}}\;dx\leq C\Vert f\Vert_{L^{1}(\mathcal{D})},\quad \forall\; j\in\mathcal{E}.
\end{equation}
On the other hand, knowing that $T_k (u_n) \leq k $ and according to \eqref{aa}, we have 
\begin{align}
\int_{\mathcal{D}}\vert \partial_{j}T_{k}(u_{n})\vert^{p_{j}}\;dx &=\int_{\mathcal{D}} \frac{\vert \partial_{j} T_{k}(u_{n})\vert^{p_{j}}}{[T_k(u_n)]^{1-\gamma }} [T_k(u_n)]^{1-\gamma }\;dx\nonumber\\
  &\leq k^{1-\gamma}C\Vert f\Vert_{L^{1}(\mathcal{D})},\quad \forall\; j\in\mathcal{E}. \label{b}
\end{align}
According to \eqref{a} and \eqref{b}, it follows that
$$
\int_{\mathcal{D}}\vert \partial_{j} u_{n}\vert^{p_{j}}\;dx \leq C ,\quad \forall\; j\in\mathcal{E}.
$$
Meaning, we were able to show the boundedness of $u_{n}$ in $W_{0}^{1, (p_{j})}(\mathcal{D})$ as an outcome of the previous estimate.
\par \textit{Proof of (1)-(ii):} Taking $u_{n}^{\gamma}$ as a test function in \eqref{(2.1)}, we arrive at
\begin{align*}
& \sum_{j\in\mathcal{E}}\int_{\mathcal{D}}\left(b(x)u_{n}^{\gamma-1}+u_{n}^{q+\gamma-1}\right)\vert \partial_{j} u_{n}\vert^{p_{j}}\;dx= \int_{\mathcal{D}} \frac{f_{n}}{\left(u_{n}+\frac{1}{n}\right)^{\gamma}}u_{n}^{\gamma}\;dx.
\end{align*}    
Since $b(x)u_{n}^{\gamma-1}\geq \alpha u_{n}^{\gamma-1}\geq0$ and $\frac{f_{n}}{\left(u_{n}+\frac{1}{n}\right)^{\gamma}}u_{n}^{\gamma}\leq f$, we obtain
\begin{equation}
\int_{\mathcal{D}}\frac{\vert \partial_{j} u_{n}\vert^{p_{j}}}{u_{n}^{1-q-\gamma}}\;dx \leq \Vert f\Vert_{L^{1}(\mathcal{D})}, \quad \forall\; j\in\mathcal{E}. \label{18}
\end{equation}
 If $q=1-\gamma$, then from \eqref{18} we get $u_{n}$ is bounded in $W_{0}^{1,(p_{j})}(\mathcal{D})$. 
\par \textit{Proof of (1)-(iii):} If $q<1-\gamma$, we select $ \eta_{j},$ such that $ 1< \eta_{j}\leq p_{j} $ for all $j\in\mathcal{E}$ and employing H\"older's inequality with exponents $\left(\frac{p_{j}}{\eta_{j}},\left(\frac{p_{j}}{\eta_{j}}\right)'\right)$ along with \eqref{18}, we derive
\begin{align}
\label{(8.6)}
\int_{\mathcal{D}}\vert \partial_{j}u_{n}\vert^{\eta_{j}}\;dx&=\int_{\mathcal{D}}\frac{\vert \partial_{j}u_{n}\big\vert^{\eta_{j}}}{u_{n}^{(1-q-\gamma)\frac{ \eta_{j}}{p_{j}}}}u_{n}^{(1-q-\gamma)\frac{ \eta_{j}}{p_{j}}}\;dx\nonumber\\
&\leq \left(\int_{\mathcal{D}}\frac{\vert \partial_{j}u_{n}\vert^{p_{j}} }{u_{n}^{1-q-\gamma}}\;dx\right)^{\frac{\eta_{j}}{p_{j}}}\left(\int_{\mathcal{D}}u_{n}^{(1-q-\gamma)\frac{ {\eta_{j}}}{p_{j}- \eta_{j}}}\;dx\right)^{\frac{p_{j}-\eta_{j}}{p_{j}}}\nonumber\\
&\leq C\left(\int_{\mathcal{D}}u_{n}^{(1-q-\gamma)\frac{ {\eta_{j}}}{p_{j}- \eta_{j}}}\;dx\right)^{\frac{p_{j}-\eta_{j}}{p_{j}}}.
\end{align}
We set $ \eta_{j}=\varrho p_{j} $, where $\varrho\in [0,1)$, yielding
\begin{align}
\label{(8.7)}
(1-q-\gamma)\frac{ {\eta_{j}}}{p_{j}- \eta_{j}}=(1-q-\gamma)\frac{\varrho}{1-\varrho}=\frac{\varrho\overline{p}N}{N-\varrho\overline{p}}=\overline{\eta}^{*}.
\end{align}
This inequality, along with the condition $\overline{p}<N$, indicates that
\begin{align}
\label{(08.7)}
\varrho=\frac{N(\overline{p}+q+\gamma-1)}{\overline{p}(N+q+\gamma-1)}<1.
\end{align}
By \eqref{(8.6)} and \eqref{(8.7)}, we have
\begin{align}
\label{(008.7)}
\int_{\mathcal{D}}\vert \partial_{j}u_{n}\vert^{\eta_{j}}\;dx \leq C\left(\int_{\mathcal{D}}u_{n}^{\overline{\eta}^{*}}\;dx\right)^{1-\varrho}\;dx,
\end{align}
hence,
\begin{align*}
\prod_{j\in\mathcal{E}}\left(\int_{\mathcal{D}}\vert \partial_{j}u_{n}\vert^{\eta_{j}}\;dx\right)^{\frac{1}{N\eta_{j}}}&\leq C\prod_{j\in\mathcal{E}}\left(\int_{\mathcal{D}}u_{n}^{\overline{\eta}^{*}}\;dx\right)^{\frac{1-\varrho}{\eta_{j}N}}\\
&=\left(\int_{\mathcal{D}}u_{n}^{\overline{\eta}^{*}}\;dx\right)^{\frac{1-\varrho}{\overline{\eta}}}.
\end{align*}
Then, by applying \eqref{(4)} from Lemma \ref{L1} with $w = u_{n}$ and $\tau = \overline{\eta}^*$, one obtains
\begin{align*}
\Vert u_{n}\Vert_{L^{\overline{\eta}^{*}}(\mathcal{D})}\leq C \Vert u_{n}\Vert_{L^{\overline{\eta}^{*}}(\mathcal{D})}^{\frac{(1-\varrho)\overline{\eta}^{*}}{\overline{\eta}}}.
\end{align*}
Since $1>\frac{(1-\varrho)\overline{\eta}^{*}}{\overline{\eta}}$ (since $\overline{p}<N$), we have $u_{n}$ in bounded in $L^{\overline{\eta}^{*}}(\mathcal{D})$. Hence, from \eqref{(08.7)} and \eqref{(008.7)}, we deduce that  $u_{n}$ is bounded in $W_{0}^{1,(\eta_{j})}(\mathcal{D})$.
%%%%%%%%%%%%%%%%%%%%%%%%%%%%%%%%%%%%%%%%%%%%%%%%%%%%
%%%%%%%%%%%%%%%%%%%%%%%%%%%%%%%%%%%%%%%%%%%%%%%%%%%%
 \par \textit{Proof of (2):}
We choose $u_{n}^{\gamma} $ as a test function in \eqref{(2.1)} and apply \eqref{(1.2)}, which gives us
\begin{align}
\label{E5}
\alpha\gamma\sum_{j\in\mathcal{E}}\int_{\mathcal{D}}u_{n}^{\gamma-1}\vert \partial_{j}u_{n}\vert^{p_{j}}\;dx+\sum_{j\in\mathcal{E}}\int_{\mathcal{D}}u_{n}^{q+\gamma-1}\vert \partial_{j}u_{n}\vert^{p_{j}}\;dx\leq  \Vert f\Vert_{L^{1}(\mathcal{D})}.
\end{align}
Dropping the positive term in \eqref{E5}, we get
$$\int_{\mathcal{D}}u_{n}^{q+\gamma-1}\vert \partial_{j}u_{n}\vert^{p_{j}}\;dx\leq  \Vert f\Vert_{L^{1}(\mathcal{D})}\quad \forall\; j\in\mathcal{E},$$
then 
\begin{align}
\label{E6}
\prod_{j\in\mathcal{E}}\left(\int_{\mathcal{D}}u_{n}^{q+\gamma-1}\vert \partial_{j}u_{n}\vert^{p_{j}}\;dx\right)^{\frac{1}{p_{j}}}\leq \Vert f\Vert_{L^{1}(\mathcal{D})}^{\frac{N}{\overline{p}}}
\end{align}
Using \eqref{E6} and \eqref{(9)}, we obtain
\begin{align}
\label{E7}
\left(\int_{\mathcal{D}}u_{n}^{r}\;dx\right)^{\frac{N}{\overline{p}}-1}\leq C,
\end{align}
where
\begin{equation*}
\left\{\begin{array}{ll}r=\dfrac{1+t_{j}}{b_{j}(N-1)-1+\frac{1}{p_{j}}}, & t_{j}\geq0,\;\; \forall\; j\in\mathcal{E}, \\
t_{j}p_{j}=q+\gamma-1, & \forall\; j\in\mathcal{E}, \\
\sum_{j\in\mathcal{E}}b_{j}=1, & b_{j}\geq0,\;\; \forall\; j\in\mathcal{E}.
 \end{array}
 \right.
\end{equation*}
This implies that
\begin{align}
\label{E8}
r=\frac{N(q+\gamma-1+\overline{p})}{N-\overline{p}}
\end{align}
From \eqref{E7} and \eqref{E8}, it follows the boundedness of $u_{n}$ in $L^{r}(\mathcal{D})$. 
\par Therefore, by \eqref{E5}, we obtain
$$\int_{\mathcal{D}}u_{n}^{\gamma-1}\vert \partial_{j}u_{n}\vert^{p_{j}}\;dx\leq \frac{\Vert f\Vert_{L^{1}(\mathcal{D})}}{\gamma\alpha},\quad \forall\; j\in\mathcal{E}.$$
Using Lemma \ref{LL3}, we get
$$\int_{\mathcal{D}}\vert \partial_{j}u_{n}\vert^{p_{j}}\;dx\leq \frac{\Vert f\Vert_{L^{1}(\mathcal{D})}}{\tilde{C}^{\gamma-1}\gamma\alpha},\quad \forall\; j\in\mathcal{E},$$
this implies that $u_{n}$ is bounded in $W_{0}^{1,(p_{j})}(\mathcal{D})$.
\par \textit{Proof of (3):} The proof of (3) is similar to the one of (2) with set $\gamma=1$.
\end{proof}
%%%%%%%%%%%%%%%%%%%%%%%%%%%%%%%%%%%%%%%%%%%%%%%%%%%%
\begin{lemma} 
\label{LL2} 
Let $u_{n}$ be a solution to \eqref{(2.1)}. Assume that $q\geq0$ and $f \in L^{1}(\mathcal{D})$, then 
\begin{itemize}
\item[(1)] If $0<\gamma\leq1$, then $u_{n}^{q}\left|\partial_{j} u_{n}\right|^{p_{j}-1}$ is bounded in $L^{\delta_{j}}(\mathcal{D})$ for every $1<\delta_{j}<p_{j}^{\prime}$ and for all $j\in\mathcal{E}$.
\item[(2)] If $\gamma>1$ and $q<(\gamma-1)(p_{j}-1)$ for all $j\in\mathcal{E}$, then $u_{n}^{q}\left|\partial_{j} u_{n}\right|^{p_{j}-1}$ is bounded in $L^{p'_{j}}(\mathcal{D})$ for all $j\in\mathcal{E}$.
\end{itemize}
\end{lemma}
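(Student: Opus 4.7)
The plan is to apply Hölder's inequality in a way that isolates the a priori bound
\[
\int_{\mathcal{D}} u_n^{q+\gamma-1}\vert\partial_j u_n\vert^{p_j}\,dx \leq C, \qquad \forall\,j\in\mathcal{E},
\]
already obtained inside the proof of Lemma~\ref{LL1} by choosing $u_n^{\gamma}$ as a test function in \eqref{(2.1)}, dropping the nonnegative term $b(x)u_n^{\gamma-1}\vert\partial_j u_n\vert^{p_j}$, and using $f_n u_n^{\gamma}/(u_n+\tfrac{1}{n})^{\gamma}\leq f_n$. This estimate is valid for all $\gamma>0$, $q\geq 0$ and will drive both parts, together with the uniform lower bound $u_n\geq \tilde C>0$ from Lemma~\ref{LL3}.

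\textbf{Part (2).} For $\delta_j=p_j'$, the identity $(p_j-1)p_j'=p_j$ gives the algebraic factorisation
\[
\bigl(u_n^{q}\vert\partial_j u_n\vert^{p_j-1}\bigr)^{p_j'} = u_n^{\,q p_j'-q-\gamma+1}\cdot u_n^{q+\gamma-1}\vert\partial_j u_n\vert^{p_j}.
\]
The exponent $qp_j'-q-\gamma+1 = \tfrac{q}{p_j-1}-(\gamma-1)$ is strictly negative precisely under the hypothesis $q<(\gamma-1)(p_j-1)$; hence Lemma~\ref{LL3} yields the pointwise bound $u_n^{\,qp_j'-q-\gamma+1}\leq \tilde C^{\,qp_j'-q-\gamma+1}$ uniformly in $n$, and the displayed a priori estimate closes the argument. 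No further integrability of $u_n$ is needed here.

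\textbf{Part (1) and the main obstacle.} For $0<\gamma\leq 1$ and $\delta_j\in(1,p_j')$, set $\tau_j:=\tfrac{(p_j-1)\delta_j}{p_j}\in(0,1)$ and decompose
\[
\bigl(u_n^{q}\vert\partial_j u_n\vert^{p_j-1}\bigr)^{\delta_j} = \bigl(u_n^{q+\gamma-1}\vert\partial_j u_n\vert^{p_j}\bigr)^{\tau_j}\cdot u_n^{\,\delta_j[q+(1-\gamma)(p_j-1)]/p_j}.
\]
Hölder with exponents $\tfrac{1}{\tau_j}$ and $\tfrac{1}{1-\tau_j}$ then yields a bound of the form $C\bigl(\int_{\mathcal{D}} u_n^{\Sigma_j}\,dx\bigr)^{1-\tau_j}$ with $\Sigma_j:=\tfrac{\delta_j[q+(1-\gamma)(p_j-1)]}{p_j-(p_j-1)\delta_j}\geq 0$. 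I would close this step by invoking the appropriate clause of Lemma~\ref{LL1}: the $W_0^{1,(p_j)}(\mathcal{D})$-bound on $u_n$ (whence $u_n\in L^{\bar p^*}$ via the anisotropic Sobolev inequality \eqref{(4)}) in the regimes $q\geq 1-\gamma$, together with the $L^{r(1,q)}$-bound when $\gamma=1$; and the weaker $W_0^{1,(\eta_j)}(\mathcal{D})$-bound (hence $u_n\in L^{\bar\eta^*}$) when $q<1-\gamma$. The main bookkeeping obstacle is to verify, subcase by subcase, that for each admissible $\delta_j<p_j'$ the exponent $\Sigma_j$ lies within the available integrability window for $u_n$; this is a purely algebraic check involving the harmonic mean $\bar p$ and the parameters $q,\gamma,p_j$. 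Part (2) is, by contrast, essentially transparent because Lemma~\ref{LL3} converts the negative power of $u_n$ into a uniform constant.
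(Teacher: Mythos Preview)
Your Part~(2) is correct and is in fact a bit cleaner than the paper's argument: the paper splits $\mathcal{D}$ at the level set $\{u_n=1\}$, using $u_n^{qp_j'}\leq u_n^{q+\gamma-1}$ on $\{u_n>1\}$ and $u_n^{qp_j'}\leq 1$ together with the $T_1(u_n)$-energy on $\{u_n\leq 1\}$, whereas you invoke the uniform lower bound $u_n\geq\tilde C$ from Lemma~\ref{LL3} to turn the negative power $u_n^{qp_j'-q-\gamma+1}$ directly into a constant. Both approaches rest on the same master estimate $\int_{\mathcal D} u_n^{q+\gamma-1}\vert\partial_j u_n\vert^{p_j}\,dx\leq C$.

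Your Part~(1), however, has a genuine gap: the ``purely algebraic check'' you defer does \emph{not} go through for the full range $\delta_j\in(1,p_j')$. Whenever $0<\gamma<1$ (and also when $\gamma=1$, $q>0$) one has $q+(1-\gamma)(p_j-1)>0$, so your exponent
\[
\Sigma_j=\frac{\delta_j\bigl[q+(1-\gamma)(p_j-1)\bigr]}{p_j-(p_j-1)\delta_j}\longrightarrow+\infty\quad\text{as }\delta_j\to p_j',
\]
while Lemma~\ref{LL1} only places $u_n$ in a \emph{fixed} Lebesgue space ($L^{\bar p^*}$, $L^{\bar\eta^*}$ or $L^{r(1,q)}$, as the case may be). Hence your H\"older route, built on the single weighted energy $\int u_n^{q+\gamma-1}\vert\partial_j u_n\vert^{p_j}\leq C$ plus a one-shot call to Lemma~\ref{LL1}, yields only a proper subinterval $(1,\delta_j^*)\subsetneq(1,p_j')$. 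The paper proceeds differently: it tests \eqref{(2.1)} with $(T_1(u_n))^{\gamma}\bigl(1-(1+u_n)^{1-\lambda}\bigr)$ for a \emph{free} parameter $\lambda>1$, producing a one-parameter family of weighted estimates $\int\vert\partial_j u_n\vert^{p_j}(1+u_n)^{-(\lambda-q)}\,dx\leq C$, and then closes a self-consistent loop through the anisotropic inequality~\eqref{(9)} (Lemma~\ref{L2}), solving an algebraic system that simultaneously fixes an $L^r$ bound on $u_n$ and the value $\rho_j=\theta(\lambda)\,p_j'$. The extra degree of freedom $\lambda$ is precisely the mechanism the paper uses to vary $\rho_j$; quoting Lemma~\ref{LL1} as a black box forfeits it.
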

%%%%%%%%%%%%%%%%%%%%%%%%%%%%%%%%%%%%%%%%%%%%%%%%%%%%
\begin{proof} \textit{Proof of (1):}
Testing \eqref{(2.1)} with 
$$(T_{1}(u_{n}))^{\gamma}\left(1-\frac{1}{\left(1+u_{n}\right)^{\lambda-1}}\right),\quad \mbox{with}\;  \lambda>1,$$
we get
\begin{align*}
&\gamma \sum_{j\in\mathcal{E}}\int_{\mathcal{D}}\left(T_{1}(u_{n})\right)^{\gamma-1}\left(1-\frac{1}{\left(1+u_{n}\right)^{\lambda-1}}\right)\left(b(x)+u_{n}^{q}\right)\left\vert \partial_{j} T_{1}\left(u_{n}\right)\right\vert^{p_{j}}\;dx\\
&\qquad +(\lambda-1)\sum_{j\in\mathcal{E}} \int_{\mathcal{D}}\left(T_{1}(u_{n})\right)^{\gamma}\left(b(x)+u_{n}^{q}\right) \frac{\left\vert \partial_{j} u_{n}\right\vert^{p_{j}}}{\left(1+u_{n}\right)^{\lambda}}\;dx\\
&\quad=\int_{\mathcal{D}} \frac{f_{n}}{\left(u_{n}+\frac{1}{n}\right)^{\gamma}}(T_{1}(u_{n}))^{\gamma}\left(1-\frac{1}{\left(1+u_{n}\right)^{\lambda-1}}\right)\;dx.
\end{align*}
Neglecting the positive terms on the left-hand side, use \eqref{(1.2)} and the fact that
$$\frac{b(x)+u_{n}^{q}}{(1+u_{n})^{\lambda}}\geq \frac{C_{\alpha}}{(1+u_{n})^{\lambda-q}},\quad (T_{1}(u_{n}))^{\gamma}\left(1-\frac{1}{\left(1+u_{n}\right)^{\lambda-1}}\right)\leq u_{n}^{\gamma},$$
we obtain
$$C_{\alpha}(\lambda-1) \int_{\mathcal{D}}T_{1}(u_{n})^{\gamma}\frac{\left\vert \partial_{j}u_{n}\right\vert^{p_{j}}}{\left(1+u_{n}\right)^{\lambda-q}} \;dx\leq \Vert f\Vert_{L^{1}(\mathcal{D})},\quad \forall\; j\in\mathcal{E}.$$
Now working on $\{u_{n} \geq 1\}$, we have
$$\int_{\{u_{n} \geq 1\}}\frac{\left\vert \partial_{j}u_{n}\right\vert^{p_{j}}}{\left(1+u_{n}\right)^{\lambda-q}} \;dx\leq C,\quad \forall\; j\in\mathcal{E}.$$
Consequently, using \eqref{b} and the above estimate, we reach
\begin{align}
\label{98} 
\int_{\mathcal{D}}\frac{\left\vert \partial_{j}u_{n}\right\vert^{p_{j}}}{\left(1+u_{n}\right)^{\lambda-q}}\;dx=\int_{\{u_{n} \geq 1\}}\frac{\left\vert \partial_{j}u_{n}\right\vert^{p_{j}}}{\left(1+u_{n}\right)^{\lambda-q}}\;dx+\int_{\{u_{n} < 1\}}\frac{\left\vert \partial_{j}T_{1}(u_{n})\right\vert^{p_{j}}}{\left(1+u_{n}\right)^{\lambda-q}}\;dx\leq C.
\end{align}
Next, for $1 < \rho_{j} < p'_{j} = \frac{p_{j}}{p_{j} - 1}$, applying H\"older's inequality and \eqref{98}, we obtain the following for all $j\in\mathcal{E}$
\begin{align}
\int_{\mathcal{D}} u_{n}^{q \rho_{j}(p_{j}-1)}\left|\partial_{j} u_{n}\right|^{\rho_{j}(p_{j}-1)}\;dx &\leq    \int_{\mathcal{D}}\left(1+u_{n}\right)^{q \rho_{j}(p_{j}-1)}\left|\partial_{j} u_{n}\right|^{\rho_{j}(p_{j}-1)}\;dx \nonumber \\
&=\int_{\mathcal{D}} \frac{\left|\partial_{j} u_{n}\right|^{\rho_{j}(p_{j}-1)}}{\left(1+u_{n}\right)^{\frac{(\lambda-q) \rho_{j}(p_{j}-1)}{p_{j}}}}\left(1+u_{n}\right)^{\frac{(\lambda-q) \rho_{j}(p_{j}-1)}{p_{j}}+\rho_{j} q(p_{j}-1)}\;dx \nonumber \\
&=\int_{\mathcal{D}} \frac{\left|\partial_{j} u_{n}\right|^{\rho_{j}(p_{j}-1)}}{\left(1+u_{n}\right)^{\frac{(\lambda-q) \rho_{j}(p_{j}-1)}{p_{j}}}}\left(1+u_{n}\; \right)^{\frac{\rho_{j}(p_{j}-1)(\lambda-q+p_{j} q)}{p_{j}}} \nonumber \;dx\\
&\leq \left( \int_{\mathcal{D}} \frac{\left|\partial_{j} u_{n}\right|^{p_{j}}}{\left(1+u_{n}\right)^{\lambda-q}}\;dx\right)^{\frac{\delta_{j}}{p'_{j}}} \left( \int_{\mathcal{D}}\left(1+u_{n}\right)^{\frac{\rho_{j}(p_{j}-1)(\lambda-q+p_{j} q)}{p_{j}-\rho_{j}(p_{j}-1)}}\;dx\right)^{\frac{p_{j}-\rho_{j}(p_{j}-1)}{p_{j}}} \nonumber \\
& \leq C\left( \int_{\mathcal{D}} (1+u_{n})^{\frac{\rho_{j}(p_{j}-1)(\lambda-q+p_{j} q)}{p_{j}-\rho_{j}(p_{j}-1)}}\;dx\right)^{\frac{p_{j}-\rho_{j}(p_{j}-1)}{p_{j}}},  \nonumber
\end{align}
which implies for all $j\in\mathcal{E}$
\begin{equation} 
\label{99}
\int_{\mathcal{D}} u_{n}^{q \rho_{j}(p_{j}-1)}\left|\partial_{j} u_{n}\right|^{\rho_{j}(p_{j}-1)} \;dx\leq C\left( \int_{\mathcal{D}} (1+u_{n})^{\frac{\rho_{j}(p_{j}-1)(\lambda-q+p_{j} q)}{p_{j}-\rho_{j}(p_{j}-1)}}\;dx\right)^{\frac{p_{j}-\rho_{j}(p_{j}-1)}{p_{j}}}.
\end{equation}
We define $\rho_{j}=\theta\frac{p_{j}}{p_{j}-1}$ with $\theta\in (0,1)$, which means that for all $j\in\mathcal{E}$
\begin{align}
\label{E10}
\frac{p_{j}-\rho_{j}(p_{j}-1)}{p_{j}}=1-\theta,\quad \mbox{and}\quad \frac{\rho_{j}(p_{j}-1)(\lambda-q+p_{j} q)}{p_{j}-\rho_{j}(p_{j}-1)}=\frac{\theta}{1-\theta}\left[\lambda+q(p_{j}-1)\right].
\end{align}
From \eqref{99} and \eqref{E10}, we obtain for all $j\in\mathcal{E}$
\begin{equation} 
\label{E11}
\int_{\mathcal{D}} u_{n}^{q \rho_{j}(p_{j}-1)}\left|\partial_{j} u_{n}\right|^{\rho_{j}(p_{j}-1)} \;dx\leq C\left( \int_{\mathcal{D}}(1+ u_{n})^{\frac{\theta}{1-\theta}\left[\lambda+q(p_{j}-1)\right]}\;dx\right)^{1-\theta}.
\end{equation}
Hence 
$$\prod_{j\in\mathcal{E}}\left(\int_{\mathcal{D}} u_{n}^{q \rho_{j}(p_{j}-1)}\left|\partial_{j} u_{n}\right|^{\rho_{j}(p_{j}-1)}\;dx\right)^{\frac{1}{\theta p_{j}}} \leq C\prod_{j\in\mathcal{E}}\left( \int_{\mathcal{D}} (1+u_{n})^{\frac{\theta}{1-\theta}\left[\lambda+q(p_{j}-1)\right]}\;dx\right)^{\frac{1-\theta}{\theta p_{j}}}.$$
Using the inequality anisotropic \eqref{(9)} with $\delta_{j}=\rho_{j}(p_{j}-1)\leq p_{j}$ (since $\theta<1$), $t_{j}=q\geq0$, $\overline{\delta}=\theta\overline{p}$, we have
\begin{equation}
\label{E12}
\left(\int_{\mathcal{D}}u_{n}^{r}\;dx\right)^{\frac{N}{\theta\overline{p}}-1}\leq C\prod_{j\in\mathcal{E}}\left( \int_{\mathcal{D}} (1+u_{n})^{\frac{\theta}{1-\theta}\left[\lambda+q(p_{j}-1)\right]}\;dx\right)^{\frac{1-\theta}{\theta p_{j}}}.
\end{equation}
Since we require that $r=\frac{\theta}{1-\theta}\left[\lambda+q(p_{j}-1)\right]$ in \eqref{E12} for all $j\in\mathcal{E}$, we must solve the following system
%\begin{align}
%r=\dfrac{1+q}{b_{j}(N-1)-1+\frac{1}{\theta p_{j}}},  \;\; \forall\; j\in\mathcal{E}, \label{S1} \\ 
%r=\frac{\theta}{1-\theta}\left[\lambda+q(p_{j}-1)\right],  \forall\; j\in\mathcal{E}, \label{S2} \\
%\sum_{j\in\mathcal{E}}b_{j}=1,  b_{j}\geq0,\;\; \forall\; j\in\mathcal{E}.\label{S3}
%\end{align}
\begin{empheq}[left=\empheqlbrace]{align}
&r=\dfrac{1+q}{b_{j}(N-1)-1+\frac{1}{\theta p_{j}}},  \;\; \forall\; j\in\mathcal{E}, \label{S1} \\ 
&r=\frac{\theta}{1-\theta}\left[\lambda+q(p_{j}-1)\right],  \forall\; j\in\mathcal{E}, \label{S2} \\
&\sum_{j\in\mathcal{E}}b_{j}=1,  b_{j}\geq0,\;\; \forall\; j\in\mathcal{E}. \label{S3}
\end{empheq}
From \eqref{S1} and \eqref{S3}, we obtain
\begin{align}
\label{S4}
r=\frac{N(1+q)\theta \overline{p}}{N-\theta\overline{p}},
\end{align}
and by \eqref{S2}, we get 
\begin{align}
\label{S5}
\theta=\frac{N(\overline{p}-\lambda+q)}{\overline{p}[N(1+q)-q\overline{p}-\lambda+q]}.
\end{align}
Since $q\geq0$ and $\lambda>1$ we obtain $0<\theta<1$. Combining \eqref{S4} and \eqref{S5}, we get
\begin{align}
\label{S6}
r=\frac{N(\overline{p}-\lambda+q)}{N-\overline{p}}.
\end{align}
By \eqref{E12} and \eqref{S6}, we find
\begin{align}
\label{S7}
\left(\int_{\mathcal{D}}u_{n}^{r}\;dx\right)^{\frac{N}{\theta\overline{p}}-1}\leq C\left(\int_{\mathcal{D}}u_{n}^{r}\;dx\right)^{\frac{(1-\theta)N}{\theta \overline{p}}},
\end{align}
and the assumption $\overline{p}<N$ implies that $\frac{N}{\theta\overline{p}}-1>\frac{(1-\theta)N}{\theta \overline{p}}$. Thus, from \eqref{S7}, we obtain the boundedness of $u_{n}$ in $L^{r}(\mathcal{D})$, which, according to \eqref{E11}, implies that
\begin{align}
\label{S8}
\int_{\mathcal{D}} u_{n}^{q \rho_{j}(p_{j}-1)}\left|\partial_{j} u_{n}\right|^{\rho_{j}(p_{j}-1)}\;dx \leq C,\quad \forall\; j\in\mathcal{E}.
\end{align}
As $p_{j}>2$ and $\rho_{j}>1$ for all $j\in\mathcal{E}$, according to \eqref{S8}, and using H\"older's inequality, we derive
\begin{align*}
\int_{\mathcal{D}} u_{n}^{q \rho_{j}}\left|\partial_{j} u_{n}\right|^{\rho_{j}(p_{j}-1)}\;dx &=  \int_{\left\{u_{n}<1\right\}} u_{n}^{q \rho_{j}}\left|\partial_{j} u_{n}\right|^{\rho_{j}(p_{j}-1)}\;dx + \int_{\left\{u_{n} \geq 1\right\}} u_{n}^{q \rho_{j}}\left|\partial_{j} u_{n}\right|^{\rho_{j}(p_{j}-1)}\;dx \\
&\leq \int_{\mathcal{D}} \left|\partial_{j} T_{1}(u_{n})\right|^{\rho_{j}(p_{j}-1)}\;dx + \int_{\mathcal{D}} u_{n}^{q \rho_{j}(p_{j}-1)}\left|\partial_{j} u_{n}\right|^{\rho_{j}(p_{j}-1)}\;dx \\
&\leq C\left(\int_{\mathcal{D}} \left|\partial_{j} T_{1}(u_{n})\right|^{p_{j}}\;dx\right)^{\frac{\rho_{j}(p_{j}-1)}{p_{j}}}+C\\
&\leq C.
\end{align*}
Then $u_{n}^{q}\left|\partial_{j} u_{n}\right|^{p_{j}-1}$ is bounded in $L^{\rho_{j}}(\mathcal{D})$ for every $1<\rho_{j}<p'_{j}$ and for all $j\in\mathcal{E}$.
%%%%%%%%%%%%%%%%%%%%%%%%%%%%%%%%%%%%%%%%%%%%%%%%%%%%
%%%%%%%%%%%%%%%%%%%%%%%%%%%%%%%%%%%%%%%%%%%%%%%%%%%%
\par \textit{Proof of (1):} We consider $u_{n}^{\gamma}$ as a test function in \eqref{(2.1)}, applying \eqref{(1.2)}, $\frac{u_{n}^{\gamma}}{\left(u_{n}+\frac{1}{n}\right)^{\gamma}}\leq 1$, $f_{n}\leq f$, and ignoring the positive term, we get
$$
\sum_{j\in\mathcal{E}}\int_{\mathcal{D}}u_{n}^{q+\gamma-1}\vert \partial_{j}u_{n}\vert^{p_{j}}\;dx\leq\Vert f\Vert_{L^{1}(\mathcal{D})}.
$$
Therefore, we have
\begin{align}
\label{S9}
\int_{\{u_{n}\geq1\}}u_{n}^{q+\gamma-1}\vert \partial_{j}u_{n}\vert^{p_{j}}\;dx\leq C,\quad \forall\; j\in\mathcal{E}.
\end{align}
As $q\leq (p_{j}-1)(\gamma-1)$ for all $j\in\mathcal{E}$, it follows that
\begin{align}
\label{S10}
qp'_{j}\leq q+\gamma-1.
\end{align}
Using \eqref{S9} and \eqref{S10}, we deduce for all $j\in\mathcal{E}$
\begin{align*}
\int_{\mathcal{D}}\left\vert u_{n}^{q} \vert\partial_{j}u_{n}\vert^{p_{j}-2}\partial_{j}u_{n}\right\vert^{p'_{j}}\;dx&=\int_{\mathcal{D}}u_{n}^{qp'_{j}}\vert \partial_{j}u_{n}\vert^{p_{j}}\;dx\\
&=\int_{\{u_{n}> 1\}}u_{n}^{qp'_{j}}\vert \partial_{j}u_{n}\vert^{p_{j}}\;dx+\int_{\{u_{n}\leq 1\}}u_{n}^{qp'_{j}}\vert \partial_{j}u_{n}\vert^{p_{j}}\;dx\\
&\leq \int_{\{u_{n}>1\}}u_{n}^{q+\gamma-1}\vert \partial_{j}u_{n}\vert^{p_{j}}\;dx+\int_{\mathcal{D}}\vert \partial_{j}T_{1}(u_{n})\vert^{p_{j}}\;dx\\
&\leq C.
\end{align*}
Then the last inequality ensure that $u_{n}^{q}\left|\partial_{j} u_{n}\right|^{p_{j}-1}$ is bounded in $L^{p'_{j}}(\mathcal{D})$ for all $j\in\mathcal{E}$.
\end{proof}
%%%%%%%%%%%%%%%%%%%%%%%%%%%%%%%%%%%%%%%%%%%%%%%%%%%%
\subsection{Passing to the limit}
Because the proof of the cases $(\textbf{B}_{\gamma,q})$ and $(\textbf{C}_{\gamma,q})$ in Theorem \ref{theo1} are similar to that of the case $(\textbf{A}_{\gamma,q})$, we restrict to that of the case (1). By applying Lemma \ref{LL1}, we conclude that $u_{n}$ is bounded in $W_{0}^{1,(p_{j})}(\mathcal{D})$ (or equivalently in $W_{0}^{1,(\eta_{j})}(\mathcal{D})$). It follows that there exists a function $u$ such that
\begin{align}
\label{P1}
u_{n}\rightharpoonup u\quad \mbox{weakly in}\;\; W_{0}^{1,(p_{j})}(\mathcal{D}) \; (\mbox{or in}\;\; W_{0}^{1,(\eta_{j})}(\mathcal{D}))\;\; \mbox{a.e. in}\; \mathcal{D}.
\end{align}
Furthermore, according to \cite{amine}, it follows that
\begin{align}
\label{P2}
\partial_{j}u_{n}\rightharpoonup \partial_{j}u\quad \mbox{a.e. in}\; \mathcal{D}.
\end{align}
For the first term, by \eqref{P2}, we observe that $b(x)\vert \partial_{j}u_{n}\vert^{p_{j}-2}\partial_{j}u_{n}$ converge to $b(x)\vert \partial_{j}u\vert^{p_{j}-2}\partial_{j}u$ almost everywhere in $\mathcal{D}$. Furthermore, since $u_{n}$ is bounded in $W_{0}^{1,(\eta_{j})}(\mathcal{D})$ (or equivalently in $W_{0}^{1,(p_{j})}(\mathcal{D})$), we can apply H\"older's inequality to obtain, for any measurable subset  $E\subset \mathcal{D}$, the estimate
\begin{align*}
\int_{E}b(x)\vert \partial_{j}u_{n}\vert^{p_{j}-2}\partial_{j}u_{n}\;dx&\leq \beta \int_{E}\vert \partial_{j}u_{n}\vert^{p_{j}-1}\;dx\\
&\leq \beta\left(\int_{\mathcal{D}}\vert \partial_{j}u_{n}\vert^{\eta_{j}}\;dx\right)^{\frac{p_{j}-1}{\eta_{j}}}\vert E\vert^{1-\frac{p_{j}-1}{\eta_{j}}}\\
&\leq \beta C\vert E\vert^{\frac{\eta_{j}-p_{j}+1}{\eta_{j}}}.
\end{align*}
This indicates that the sequence  $\{b(x)\vert \partial_{j}u_{n}\vert^{p_{j}-2}\partial_{j}u_{n}\}_{n}$  is equi-integrable. By applying Vitali's theorem, we infer that for every test function $\varphi \in C_{0}^{1}(\mathcal{D})$, the result is
\begin{align}
\label{P3}
\lim\limits_{n\rightarrow+\infty}\sum_{j\in\mathcal{E}}\int_{\mathcal{D}}b(x)\vert \partial_{j}u_{n}\vert^{p_{j}-2}\partial_{j}u_{n}\partial_{j}\varphi\;dx=\sum_{j\in\mathcal{E}}\int_{\mathcal{D}}b(x)\vert \partial_{j}u\vert^{p_{j}-2}\partial_{j}u \partial_{j}\varphi\;dx.
\end{align}
Alternatively, by Lemma \ref{LL2} and \eqref{P1}, we know that the sequence  $\{u_{n}^{q}\vert \partial_{j}u_{n}\vert^{p_{j}-2}\partial_{j}u_{n}\}_{n}$ converges weakly to $u^{q}\vert \partial_{j}u\vert^{p_{j}-2}\partial_{j}u$ in $L^{\rho_{j}}(\mathcal{D})$, for every $1<\rho_{j}<p'_{j}$ and for all indices $j\in\mathcal{E}$. By combining this weak convergence with \eqref{P3}, we derive
\begin{align}
\label{P4}
&\lim\limits_{n\rightarrow+\infty}\sum_{j\in\mathcal{E}}\int_{\mathcal{D}}\left( b(x)+u_{n}^{q}\right)\vert \partial_{j}u_{n}\vert^{p_{j}-2}\partial_{j}u_{n}\partial_{j}\varphi\;dx\nonumber\\
&\quad=\sum_{j\in\mathcal{E}}\int_{\mathcal{D}}\left( b(x)+u^{q}\right)\vert \partial_{j}u\vert^{p_{j}-2}\partial_{j}u \partial_{j}\varphi\;dx, \quad \forall\; \varphi \in C_{0}^{1}(\mathcal{D}).
\end{align}
Regarding the limit of the term on the right-hand side of \eqref{(2.2)}, by Lemma \ref{LL3}, for every $\varphi \in C_{c}^{1}(\mathcal{D})$,
$$\left\vert \frac{f_{n}\varphi}{\left( u_{n}+\frac{1}{n}\right)^{\gamma}}\right\vert\leq \frac{\Vert\varphi\Vert_{L^{\infty}(\mathcal{D})}f}{\tilde{C}^{\gamma}}.$$
Then, from the previous estimate, \eqref{P1}, and by applying Lebesgue's Theorem, we get 
\begin{align}
\label{P5}
\lim\limits_{n\rightarrow+\infty}\int_{\mathcal{D}}\frac{f_{n}}{\left( u_{n}+\frac{1}{n}\right)^{\gamma}}\varphi\;dx=\int_{\mathcal{D}}\frac{f}{u^{\gamma}}\varphi\;dx,\quad \forall \; \varphi\in C_{c}^{1}(\mathcal{D}).
\end{align}
Consider $\varphi \in C_{c}^{1}(\mathcal{D})$ as a test function in \eqref{(2.2)}. By the convergence results \eqref{P4}, \eqref{P5}, and taking the limit as $n \to +\infty$, we obtain 
$$\sum_{j\in\mathcal{E}}\int_{\mathcal{D}}\left( b(x)+u^{q}\right)\vert \partial_{j}u\vert^{p_{j}-2}\partial_{j}u \partial_{j}\varphi\;dx=\int_{\mathcal{D}}\frac{f}{u^{\gamma}}\varphi \;dx,\quad \forall \; \varphi\in C_{c}^{1}(\mathcal{D}).$$
\subsubsection*{Acknowledgements}

\end{document}